\newtheorem{theorem}{Theorem}
\newtheorem{lemma}[theorem]{Lemma}
\newtheorem{corollary}[theorem]{Corollary}
\newtheorem{proposition}[theorem]{Proposition}
\newtheorem{definition}[theorem]{Definition}
\newtheorem{fact}{Fact}
\def\sin{C}
\def\Msin{M}
\def\sout{D}
\def\cT{\mathcal{T}}
\def\cE{\mathcal{E}}
\def\uar{\mathrm{Out}\ \!}
\def\dar{\mathrm{In}\ \!}
\def\Seq{\mathrm{Seq}}
\def\ol{\overline}
\def\wh{\widehat}
\def\oC{G_2'}
\begin{document}

\medskip

\title[Schnyder woods for higher genus triangulated surfaces]{Schnyder woods for higher genus triangulated surfaces, with applications
       to encoding}

\medskip

\author[L. Castelli Aleardi]{Luca Castelli Aleardi}
\address{L. Castelli Aleardi: 
LIX, \'Ecole Polytechnique, 91128 Palaiseau Cedex, France\footnote{Part of the first author's work was done during his visit to the CS Department of Universit\'e Libre de Bruxelles
(Belgium).}}   
\email{amturing@lix.polytechnique.fr}
\author[\'E. Fusy]{\'Eric Fusy}
\address{\'E. Fusy: LIX, \'Ecole Polytechnique, 91128 Palaiseau Cedex, France\footnote{Part of the second author's work was done during his visit to the math department of University of British Columbia
(Vancouver, Canada).}}
\email{fusy@lix.polytechnique.fr}

\author[T. Lewiner]{Thomas Lewiner}
\address{T. Lewiner: Department of Mathematics, PUC-Rio, Brazil}   
\email{lewiner@gmail.com}

\date{\today}

\begin{abstract}
Schnyder woods are a well-known combinatorial structure for plane
triangulations, which
yields a decomposition into 3 spanning trees.
We extend here
definitions and algorithms for Schnyder woods to closed orientable
surfaces of arbitrary genus.
In particular, we describe a method to traverse a triangulation of genus $g$
and compute a so-called $g$-Schnyder wood on the way. As an application,
we give a procedure to encode a triangulation of genus $g$ and $n$ vertices
in $4n+O(g \log(n))$ bits. This matches the worst-case encoding rate of Edgebreaker
in positive genus. All the algorithms presented here have execution time $O((n+g)g)$,
hence are linear when the genus is fixed.

\emph{This is the extended and revised journal version of a
    conference paper with the title ``Schnyder woods generalized to higher
    genus triangulated surfaces'', which appeared in the Proceedings
    of the ACM Symposium on Computational Geometry 2008 (pages 311-319).}
\end{abstract}

\maketitle


\section{Introduction}\label{sec:intro}
Schnyder woods are a nice and deep combinatorial structure to
finely capture the notion of planarity of a graph. They are named
after W. Schnyder, who introduced these structures under the name
of realizers and derived as main applications a new planarity
criterion in terms of poset dimensions~\cite{Schnyder89}, as well
as a very elegant and simple straight-line drawing
algorithm~\cite{Schnyder90}.
There are several equivalent formulations of Schnyder woods,
either in terms of \emph{angle labeling} (Schnyder labeling) or
\emph{edge  coloring and orientation} or in terms of orientations
with prescribed out-degrees.
The most classical formulation is for the family of maximal plane
graphs, i.e., plane triangulations, yielding the following
striking property: the internal edges of a triangulation can be
partitioned into three trees that span all inner vertices and are
 rooted respectively at each
of the three vertices incident to the outer face. Schnyder woods,
and more generally $\alpha$-orientations, received a great deal of
attention~\cite{Schnyder90,Felsner01,Kant96,Fusy2007_thesis}. From
the combinatorial point of view, the set of Schnyder woods of a
fixed triangulation has an interesting lattice
structure~\cite{Brehm_thesis,Bonichon2002_thesis,Felsner04,DeFraysseix01,Ossona1994_thesis},
and the nice characterization in terms of spanning trees motivated
a large number of applications in several domains such as graph
drawing~\cite{Schnyder90,Kant96}, graph coding and
random sampling~\cite{Chu98,He99,Bon03,Pou03,Fus05,Bon06,CastelliDS06,BarbayISAAC07}.
Previous work focused mainly on the application and extension of the combinatorial properties
of Schnyder woods to 3-connected plane
graphs~\cite{Felsner01,Kant96}.
In this article,  we focus on triangulations, but, which is new, we consider
triangulations in arbitrary genus.

\newpage

\subsection{Related Work}\label{sec:related_work}

\subsubsection{Vertex spanning tree decompositions}
In the area of tree decompositions of graphs there exist some
works dealing with the higher genus case.
We mention one recent attempt to generalize Schnyder woods to the
case of toroidal graphs~\cite{Bon05} (genus $1$ surfaces), based
on a special planarization procedure.
In the genus $1$ case it is actually possible to choose two
adjacent non-contractible cycles, defining a so-called
\emph{tambourine}, whose removal makes the graph planar; the graph
obtained can thus be endowed with a Schnyder wood.
In the triangular case this approach yields a process for
computing a partition of the edges into three edge-disjoint
spanning trees plus at most $3$ edges.
Unfortunately, as pointed out by the authors, the local conditions
of Schnyder woods are possibly not satisfied for a large number of
vertices, because the size of the tambourine might be arbitrary
large. Moreover, it is not clear how to generalize the method to
genus $g\geq 2$.


\subsubsection{Planarizing graphs on surfaces}
A possible solution to deal with Schnyder woods (designed originally for
plane triangulations) in higher genus would consist in performing a
planarization of the surface.
Actually, given a triangulation $\mathcal{T}$ with $n$ vertices
on a surface $\mathcal{S}$
of genus $g$,
one can compute a cut-graph or a collection of $2g$ non-trivial
cycles, whose removal makes $\mathcal{S}$ a topological disk
(possibly with boundaries).
There is a number of recent
contributions~\cite{CabelloM07,VerdiereL02,EricksonH04,Kutz06,LazarusPVV01,VegterY90}
for the
efficient computation of cut-graphs, optimal (canonical) polygonal
schemas and shortest non-trivial cycles.
For example some work makes it possible to compute polygonal
schemas in time $O(gn)$ for a triangulated orientable
manifold~\cite{LazarusPVV01,VegterY90}.
Nevertheless we point out that a
planarization approach would not be best suited for our purpose.
From the combinatorial point of view this would imply to deal with
boundaries of arbitrary size (arising from the planarization
procedure), as non-trivial cycles can be of size $\Omega
(\sqrt{n})$, and cut-graphs have size $O(gn)$.
Moreover, from the algorithmic complexity point of view,  the most
efficient procedures for computing small non-trivial
cycles~\cite{CabelloM07,Kutz06} require more than linear time, the
best known bound being currently of $O(n\log n)$ time.

\subsubsection{Schnyder trees and graph encoding}

One of our main motivations for generalizing Schnyder woods to
higher genus is the great number of possible applications in
graph encoding and mesh compression that take advantage of
spanning tree decompositions~\cite{Kee95,Ros99,Tur84}, and in
particular of the ones underlying Schnyder woods (and related
extensions) for planar
graphs~\cite{BarbayISAAC07,Chi01,Chu98,Fus05,He99,Pou03}.
The combinatorial properties of Schnyder woods and the related
characterizations (\emph{canonical orderings}~\cite{Kant96}) for
planar graphs yield efficient procedures for encoding tree
structures based on multiple parenthesis words. In this context a
number of methods have been proposed for the simple
compression~\cite{He99} or the succinct
encoding~\cite{Chu98,Chi01} of several classes of planar graphs.
More recently, this approach based on spanning tree decompositions
has been further extended to design a new succinct encoding of
labeled planar graphs~\cite{BarbayISAAC07}. Once again, the main
ingredient is the definition of three traversal orders on the
vertices of a triangulation, directly based on the properties of
Schnyder woods.
Finally we point out that the existence of \emph{minimal orientations}
(orientations without counterclockwise directed cycles) recently
made it possible to design the first optimal (linear time)
encoding for  triangulations and $3$-connected plane
graphs~\cite{Fus05,Pou03}, based on  bijective
correspondences with families of
plane trees. Such bijective constructions, originally introduced by
Schaeffer~\cite{Schaeffer_thesis},
 have been applied
to many families of plane graphs (also called planar maps)
and give combinatorial interpretations of
enumerative formulas originally found by Tutte~\cite{Tutte63}.
In recent work, some of these bijections are extended to higher
genus~\cite{ChMaSc,Ch},
but a bijective construction for triangulations or 3-connected plane
graphs in higher genus is not yet known.
%
The difficulty of extending combinatorial constructions to higher genus
is due the fact that some fundamental properties, such as the Jordan curve theorem,
hold only in the planar case (genus $0$).
Nevertheless, the topological approach used by \emph{Edgebreaker}
(using at most $3.67$ bits per vertex in the planar case) has been
successfully adapted to deal with triangulated surfaces having
arbitrary topology: orientable manifolds with
handles~\cite{LopesRSST03} and also multiple
boundaries~\cite{LewinerLRV04}. Using a different approach, based
on a partitioning scheme and a multi-level hierarchical
representation~\cite{CastelliWADS05}, it is also possible to
encode a genus $g$ triangulation with
$f$ faces and $n$ vertices using $2.175f+O(g\log f)+o(f)$
bits (or $4.35n+o(gn)$ bits) which is asymptotically optimal for
surfaces with a boundary: nevertheless, the amount of additional
bits hidden in the sub-linear $o(n)$ term can be quite large, of
order $\Theta(\frac{n}{\log n}\log \log n)$.

\subsection{Contributions}
Our contributions start in Section~\ref{sec:higher_genus}, where
 we give a definition of Schnyder woods for triangulations
of arbitrary genus, which extends the definition of
Schnyder for plane triangulations. Then we describe a traversal
algorithm to actually compute such a so-called $g$-Schnyder wood
for any triangulation of genus $g$, in time $O((n+g)g)$. Again our
procedure  extends to any genus the known procedures to
traverse a plane triangulation and compute a Schnyder wood on the
way~\cite{Schnyder89,Brehm_thesis}. Finally, in
Section~\ref{sec:encoding_application}, we show that a
$g$-Schnyder wood yields an algorithm to efficiently encode a
triangulation of genus $g$ and with $n$ vertices, in
$4n+O(g\log(n))$ bits. This is again an extension to arbitrary genus of a
procedure described in~\cite{He99,BB07a} to encode plane triangulations.
Our result matches the same worst-case encoding rate as
\emph{Edgebreaker}~\cite{Ros99}, which uses at most $3.67n$ bits
in the planar case, but requires up to $4n+O(g\log n)$ bits for
meshes with positive genus~\cite{LopesRSST03,LewinerLRV04}. As far
as we know this is the best known rate for linear time (in fixed
genus)
 encoding of triangulations with
positive genus $g$, quite close to the information theory
 bound of $3.24n+\Omega(g\log n)$ bits (a more detailed
discussion is given in
Section~\ref{sec:encoding_application}).

\section{Schnyder woods for Plane Triangulations}
\subsection{Definition}
A \emph{plane triangulation} $\cT$ is a graph with no loops nor
multiple edges and embedded in the plane such
that all faces have degree 3. The edges and vertices of $\cT$
incident to the outer face are called the outer edges and outer vertices.
The other ones are called the inner edges and inner vertices.

We recall here the definition of Schnyder woods for plane
triangulations, which we will later generalize to higher genus.
While the definition is given in terms of local conditions, the
main structural property, as stated in Fact~\ref{fact:trees},
is more global, namely a partition of the
inner edges into 3 trees, see
Figure~\ref{fig:def_realizer}~\footnote{In the figures, the edges of color 0 are solid,
the edges of color 1 are dotted, and the edges of color $2$ are dashed.}.

\begin{definition}[\cite{Schnyder90}]\label{def:Schnyder wood}
Let $\mathcal{T}$ be a plane triangulation, and denote by
$v_0,v_1,v_2$ the outer vertices in counterclockwise (\emph{ccw})
order around the outer face.
A \textbf{Schnyder wood}  of $\mathcal{T}$ is an orientation and
labeling, with labels in $\{0,1,2\}$ of the inner edges of $\cT$  so as
to satisfy the following conditions:
\begin{itemize}
\item \textbf{root-face condition:} for $i\in\{0,1,2\}$, the inner edges incident to the
outer vertex $v_i$ are all ingoing of color $i$.
\item \textbf{local
condition for inner vertices:} For each inner vertex $v$, the
edges incident to $v$ in counterclockwise (\emph{ccw}) order are: one
outgoing edge  colored $2$,
zero or more incoming edges  colored $1$,  one outgoing edge  colored
$0$, zero or more incoming edges  colored $2$, one outgoing edge
 colored $1$, and zero or more incoming edges  colored $0$, which we write concisely as
$$
(\Seq(\dar 1),\uar 0, \Seq(\dar 2),\uar 1,\Seq(\dar 0),\uar 2).
$$
\end{itemize}
\end{definition}

\begin{fact}[\cite{Schnyder90}]\label{fact:trees}
Each plane triangulation $\mathcal{T}$ admits a Schnyder wood.
Given a Schnyder wood on $\mathcal{T}$, the three directed graphs
$T_0$, $T_1$, $T_2$ induced by the edges of  color $0$, $1$, $2$
are trees that span all inner vertices and are naturally rooted at
$v_0$, $v_1$, and $v_2$, respectively.
\end{fact}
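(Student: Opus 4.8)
The plan is to prove the two assertions of Fact~\ref{fact:trees} in turn: first that a Schnyder wood always exists, then that the three monochromatic subgraphs are spanning trees rooted as claimed. For existence, I would proceed by induction on the number of vertices, peeling off vertices in the style of a canonical (shelling) ordering. One removes an inner vertex $v$ that is on the outer face of the current triangulation and whose neighborhood, after deletion, is again a triangulation with the same outer triangle; the induction hypothesis furnishes a Schnyder wood of the smaller triangulation, and then one reinserts $v$, orienting its newly created edges to satisfy the local condition at $v$ (one outgoing edge of each color $0$ and $1$ to the two ``lateral'' neighbors on the outer boundary, all remaining new edges incoming of color $2$) while checking that the local conditions at the neighbors are preserved. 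The base case is the single triangle with no inner edges, vacuously a Schnyder wood. This is essentially Schnyder's original argument, so I would only sketch it.

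For the tree property, fix a Schnyder wood and analyze the subgraph $T_i$ of edges of color $i$; by symmetry take $i=0$. The key local observations are: (i) every inner vertex has exactly one outgoing edge of color $0$, by the local condition; (ii) the outer vertices $v_1,v_2$ have no outgoing edge of color $0$ (their incident inner edges are all ingoing, of color $1$ and $2$ respectively, by the root-face condition), and $v_0$ likewise has only ingoing color-$0$ edges. Hence in $T_0$ every inner vertex has out-degree exactly $1$ and $v_0$ has out-degree $0$, so $T_0$ is a functional graph on the inner vertices plus the sink $v_0$; it spans all inner vertices by (i), and it has exactly (number of inner vertices) edges. To conclude it is a tree it remains to show $T_0$ is acyclic, equivalently that following outgoing color-$0$ edges from any inner vertex eventually reaches $v_0$.

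Acyclicity is the step I expect to be the main obstacle, since it is the one genuinely global fact; it is exactly where planarity (the Jordan curve theorem) enters, and it is the property whose failure the introduction warns about in higher genus. The standard route is to argue that there is no monochromatic directed cycle and, more strongly, no ``mixed'' directed cycle using the right cyclic pattern of colors: one shows that a directed cycle $C$ in $T_0$ would bound a region, and a careful count of the edges of the other colors entering or leaving along $C$, together with the local condition read on the inner and outer sides of $C$, yields a contradiction (for instance via a discharging/rotation argument, or by exhibiting a vertex strictly innermost with respect to $C$ whose local condition cannot be met). An alternative is to invoke the well-known equivalence of this local definition with Schnyder's angle-labeling formulation and cite the acyclicity from there. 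Once $T_0$ is known to be a spanning tree rooted at $v_0$, the same argument applied to colors $1$ and $2$ finishes the proof; one may additionally remark that the three edge sets are disjoint by construction (each inner edge carries a single color) and that together they exhaust the inner edges, since each of the $n-3$ inner vertices contributes exactly one outgoing edge per color, giving $3(n-3)$ colored edges, which matches the total number of inner edges of a plane triangulation on $n$ vertices.
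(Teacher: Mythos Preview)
The paper does not give its own proof of Fact~\ref{fact:trees}: it is stated as a cited result from Schnyder's original paper~\cite{Schnyder90}, with no argument supplied. So there is, strictly speaking, nothing to compare your proposal against at that spot. That said, the paper does implicitly re-prove the \emph{existence} half in Section~\ref{sec:computing_planar}, via Brehm's traversal algorithm: grow a region $C$ by repeatedly conquering a free boundary vertex and applying the \texttt{colorient} rule. Your inductive shelling is exactly this procedure read in reverse (your ``remove an inner vertex on the outer boundary, recurse, reinsert'' is the conquest step undone), so on the existence side you and the paper are doing the same thing. One terminological wrinkle: the phrase ``an inner vertex that is on the outer face of the current triangulation'' is self-contradictory as written; what you mean is a vertex that is inner in the original $\mathcal{T}$ but lies on the boundary of the partially shelled near-triangulation. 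You should also make explicit that such a removable vertex always exists---this is precisely the planar chord-diagram lemma the paper invokes (Figure~\ref{fig:chordal_diag}).

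For the tree property, the paper offers nothing beyond the citation, so your proposal stands on its own. Your reduction to acyclicity via the functional-graph observation (each inner vertex has out-degree exactly one in $T_i$, the outer vertex $v_i$ is the unique sink) is the right framing, and your edge count $3(n-3)$ is correct. The acyclicity step you flag as the crux is indeed where the work lies; the region/Jordan-curve argument you sketch is the standard one and is sound, though as written it is only a pointer rather than a proof. If you want a self-contained argument, the cleanest route is: suppose a monochromatic directed cycle $C$ in color~$0$ exists; by the local rule the edges of color~$1$ incident to $C$ all leave $C$ on one side and the edges of color~$2$ on the other, so the bounded region enclosed by $C$ has, say, only color-$1$ paths entering it and none leaving, contradicting that color-$1$ paths from interior vertices must eventually reach $v_1$ on the outer face.
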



\begin{figure}[t]
\centering \scalebox{1}{
\includegraphics{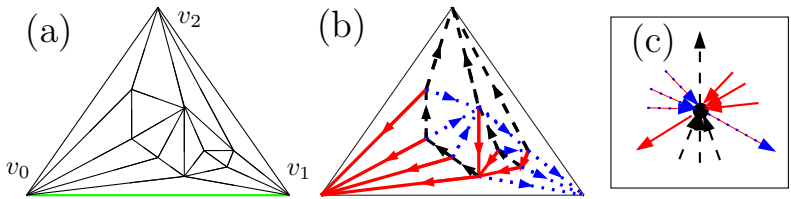}
}
\caption{(a) A rooted planar triangulation, (b) endowed with a Schnyder wood. (c) The
local condition of Schnyder woods.
\label{fig:def_realizer}}
\end{figure}

\subsection{Computation of Schnyder woods for plane triangulations}
\label{sec:computing_planar}

In this section we briefly review a well-known linear time
algorithm designed for computing a Schnyder wood of a plane
triangulation, following the presentation by
Brehm~\cite{Brehm_thesis}.
It is convenient here (in view of the generalization to higher genus)
to consider a plane triangulation as embedded on the sphere $S$, with a marked face
that plays the role of the outer face.
The procedure consists in growing a region $C$, called the
\emph{conquered region}, delimited by a simple cycle $B$ ($B$
is considered as part of $C$)~\footnote{In the figures, the faces of $\cT\backslash C$
are shaded.}. Initially $C$ consists of the
root-face (as well as its incident edges and vertices).
A \emph{chordal edge} is defined as an edge not in $C$ but with
its two extremities on $B$. A free vertex is a vertex of
$B\setminus\{v_0,v_1\}$ with no incident chordal edges. One
defines the \emph{conquest} of such a vertex $v$ as the operation
of transferring to $C$ all faces incident to $v$, as well as the
edges and vertices incident to these faces; the boundary $B$ of
$C$ is easily verified to remain a simple cycle. Associated with a
conquest is a simple rule to color and orient the edges incident
to $v$ in the exterior region. Let $v_r$ be the right neighbor
and $v_l$ the left neighbor of $v$ on $B$, looking toward
$\cT\backslash C$ (in the figures, toward the shaded area). Orient
outward of $v$ the two edges $(v, v_r)$ and $(v, v_l)$; assign
color $0$ to $(v, v_r)$ and color $1$ to $(v, v_l)$. Orient toward
$v$ and color $2$ all edges exterior to $C$ incident to $v$ (these
edges are between $(v, v_r)$ and $(v, v_l)$ in ccw order around
$v$).

The algorithm for computing a Schnyder wood of a plane
triangulation with $n$ vertices is a sequence of $n-2$ conquests
of free vertices, together with the operations of coloring and
orienting the incident edges (the initial conquest, always applied
to the vertex $v_2$, is a bit special:  the edges going to the right and left neighbors are not colored nor oriented, since these are outer edges).

\begin{figure}[th]
\centering \scalebox{1}{
\includegraphics{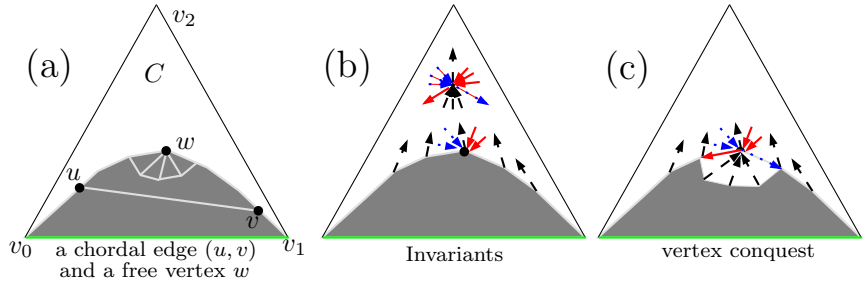}}
\caption{(a) A chordal edge and a free vertex, (b) the invariants valid
in the planar case, (c) the result of a vertex conquest.
\label{fig:def_orientation_planar}}
\end{figure}


The correctness and termination of the traversal algorithm
described above is based on the following
fundamental property illustrated in Figure~\ref{fig:chordal_diag}.
A planar chord diagram  (i.e., a
 topological disk with chordal edges that do not
cross each other) with root-edge $\{v_0,v_1\}$  always has on its
boundary a vertex $v\notin\{v_0,v_1\}$ not incident to any chord,
see for instance~\cite{Brehm_thesis} for a detailed proof.

\begin{figure}
\begin{center}
\scalebox{0.32}{
\includegraphics[width=12cm]{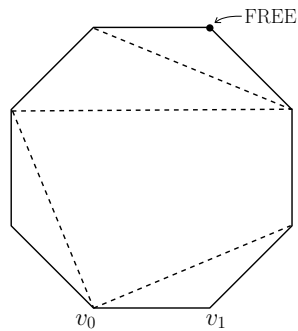}}
\end{center}
\caption{In a planar chord-diagram with a root-edge $e=\{v_0,v_1\}$,
there must be a vertex $v$ not incident to $e$ nor to any chord.}
\label{fig:chordal_diag}
\end{figure}


One proves that the structure computed by the traversal algorithm
is a Schnyder wood by considering some invariants (see
Figure~\ref{fig:def_orientation_planar}):
\begin{itemize}
 \item the edges that are already colored and directed are the inner edges
 of $C\backslash B$.
 \item for each inner vertex $v$ of $C\backslash B$, all edges
incident to $v$ are colored and directed in such a way that the
Schnyder rule (Figure~\ref{fig:def_realizer}(c)) is  satisfied;
 \item every inner vertex $v\in B$ has exactly one outgoing edge $e$
 in $C\backslash B$; and this edge has color $2$.
 Let $v_r$ be the right neighbor and $v_l$ the left neighbor of $v$ on $B$,
 looking toward $\cT\backslash C$. Then all edges strictly between $(v,v_r)$ and
 $e$ in cw order around $v$ are ingoing of color $1$ and all edges strictly between
 $e$ and $(v,v_l)$ in cw order around $v$ are ingoing of color $0$.
\end{itemize}


These invariants are easily checked to be satisfied all along
the procedure (see~\cite{Brehm_thesis} for a detailed presentation), which
yields the following result:

\begin{lemma}[Brehm~\cite{Brehm_thesis}]
Given a planar triangulation $\mathcal{T}$ with outer face $(v_0,
v_1, v_2)$ the traversal algorithm described above
computes  a
Schnyder wood of $\mathcal{T}$ and can be implemented to run in time $O(n)$.
\end{lemma}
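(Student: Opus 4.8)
The plan is to run the traversal algorithm, prove by induction on the sequence of conquests that the three invariants displayed above are preserved (the colored/oriented edges are the inner edges of $C\setminus B$; every inner vertex of $C\setminus B$ satisfies the Schnyder rule of Figure~\ref{fig:def_realizer}(c); every inner vertex of $B$ has a unique outgoing edge in $C\setminus B$, of color $2$, flanked by the prescribed ingoing edges of colors $1$ and $0$), argue separately that the algorithm never gets stuck, and then read Definition~\ref{def:Schnyder wood} off the final configuration; the running time is handled by a routine data-structure discussion. For the fact that the algorithm never gets stuck, I would note that $C$ is at every stage a closed topological disk on the sphere, so $\mathcal{T}\setminus C$ is also a closed disk whose triangulation --- with its non-boundary edges seen as chords and the outer edge $\{v_0,v_1\}$ as root-edge --- is a planar chord diagram; by the property of Figure~\ref{fig:chordal_diag}, as long as $\mathcal{T}\setminus C$ still contains a face it has a vertex of $B\setminus\{v_0,v_1\}$ incident to no chord, that is, a \free vertex. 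Since each conquest moves at least one face into $C$, the procedure terminates; by the text it carries out $n-2$ conquests, ending with $C$ equal to the whole sphere, so that in particular every inner vertex then lies in $C\setminus B$.

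Next comes the induction on conquests. The base case is trivial --- the initial $C$, the root-face, has no inner edge and no inner vertex --- and the special first conquest, of $v_2$, is verified by hand: its two edges toward $v_0$ and $v_1$ are outer and left untouched, the remaining ones become ingoing of color $2$, and this is consistent with the three invariants and with the root-face condition at $v_2$. For a later conquest of a \free vertex $v$ with right and left boundary neighbors $v_r,v_l$, invariant~3 says that, just before the conquest, the edges around $v$ in ccw order are: the boundary edges $(v,v_r)$ and $(v,v_l)$ with the block of exterior edges between them, then (continuing ccw) a block of ingoing color-$0$ edges, the outgoing color-$2$ edge $e$, and a block of ingoing color-$1$ edges back to $(v,v_r)$. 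The conquest makes $(v,v_r)$ outgoing of color $0$, $(v,v_l)$ outgoing of color $1$, and every exterior edge ingoing of color $2$; rereading the cyclic list around $v$ now gives $(\Seq(\dar 1),\uar 0,\Seq(\dar 2),\uar 1,\Seq(\dar 0),\uar 2)$, which is exactly the local condition of Figure~\ref{fig:def_realizer}(c), so invariant~2 holds at the newly interior vertex $v$; and invariant~1 is restored because the edges just colored are precisely the ones that crossed from $\mathcal{T}\setminus C$ into the interior of $C$. I expect the real work to be in re-establishing invariant~3 on the new boundary: the vertices just inserted into $B$ each receive a single colored edge, outgoing of color $2$ toward $v$, and had none before, so they are immediately fine; but at $v_r$ and at $v_l$ one must check, using the local picture obtained above and planarity, that the new edge $(v,v_r)$ arrives on the correct side of the unique outgoing color-$2$ edge of $v_r$ (symmetrically for $(v,v_l)$ and $v_l$), and that the updated $B$ is still a simple cycle. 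This is the local case analysis detailed in~\cite{Brehm_thesis}, which I would follow.

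It remains to conclude. At the end every inner vertex lies in $C\setminus B$, so by invariant~2 it obeys the Schnyder local condition; and by tracking which of the two neighbors plays the role of $v_r$ (resp. $v_l$) along the part of $B$ incident to $v_0$ (resp. $v_1$) --- an extra, easily maintained invariant --- one gets that every inner edge at $v_i$ is ingoing of color $i$, the root-face condition. Hence the output is a Schnyder wood of $\mathcal{T}$. For the $O(n)$ bound I would keep $B$ as a circular doubly linked list, a boolean ``belongs to $C$'' flag per vertex, a count of incident chordal edges per boundary vertex, and a queue of \free vertices; a conquest of $v$ only inspects the edges and faces around $v$ together with, for each vertex $w$ newly inserted into $B$, the edges around $w$ (to locate the chords $w$ creates and update the counters and the queue), so the total work telescopes to $O\bigl(\sum_v \deg(v)\bigr)=O(n)$.
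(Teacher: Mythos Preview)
Your proposal is correct and follows essentially the same approach as the paper: the paper itself does not give a proof but merely points to the three invariants, asserts they are ``easily checked to be satisfied all along the procedure'' (deferring the details to Brehm's thesis), and invokes the planar chord-diagram property of Figure~\ref{fig:chordal_diag} for termination. Your plan is a faithful and more detailed expansion of exactly that sketch, including the correct observation that the root-face condition at $v_0,v_1$ requires a small additional invariant not listed in the paper's bullet points.
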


Note that a triangulation $\cT$ can have many different Schnyder woods (as
shown by Brehm~\cite{Brehm_thesis}, the set of Schnyder woods of $\cT$ forms
a distributive lattice). Furthermore, the same Schnyder wood can be
obtained from many different total orders on vertices for the above-described
traversal procedure. Such total orders on the vertices of $\cT$ are
called \emph{canonical orderings}~\cite{Kant96}.

\section{Concepts of topological graph theory}
Before generalizing the definition of Schnyder woods and computation methods
to any genus, we need to define the necessary concepts of
topological graph theory. The graphs considered here are
allowed to have loops and multiple edges.

\subsection{Graphs on surfaces, maps, subcomplexes.}\label{sec:subcomplexes}
A \emph{graph on a surface} $M$ is a graph $G=(V,E)$
 embedded without edge-crossings on a closed
orientable surface $S$ (such a surface is specified by its genus
$g$, i.e., the number of handles). If the components of
$S\backslash G$ are homeomorphic to topological disks, then $M$ is
called a (topological) \emph{map}, which implies that $G$ is a
connected graph. A subgraph $G'=(V',E')$ of $G$ is called cellular
if the components of $S\backslash G'$ are homeomorphic to
topological disks, i.e., the graph $G'$ equipped with the
embedding inherited from $G$ is a map. A subgraph $G'=(V',E')$ is
\emph{spanning} if $V'=V$. A \emph{cut-graph} of $M$ is a spanning
cellular subgraph $G'=(V',E')$ with a unique face, i.e.,
$S\backslash G'$ is homeomorphic to a topological disk.

Note that a map has more structure than a graph, since
the edges around each vertex are in a certain cyclic order. In addition, a map has
faces (the components of $M\backslash S$). By the Euler relation, the genus $g$ of
the surface on which $M$ is embedded satisfies
$$
2-2g=\chi(M)=|V|-|E|+|F|,
$$
where $\chi(M)$ is the Euler characteristic of $M$, and $V$, $E$,
and $F$ are the sets of vertices, edges, and faces in $M$. It is
convenient to view each edge $e=\{u,v\}\in E$ as made of two
\emph{brins} (or half-edges), originating respectively at $u$ and
at $v$, the two brins meeting in the middle of $e$; the two brins
of $e$ are said to be \emph{opposite} to each other. (Brins are
also called darts in the literature). The \emph{follower} of a
brin $h$ is the next brin after $h$ in clockwise order (shortly
cw) around the origin $v$ of $h$. A \emph{facial walk} is a cyclic
sequence $(b_1,\ldots,b_k)$, where for $i\in[1..k]$, $b_{i+1}$
(with the convention that $b_{k+1}=b_1$) is the opposite brin of
the follower of $b_i$. A facial walk corresponds to a walk along
the boundary of a face $f$ of $M$ in ccw order (i.e., with the
interior of $f$ on the left).

The face incident to a brin $h$ is defined as the face on the left of $h$ when one looks
toward the origin of $h$.
Note that to a brin $h$ of $M$ corresponds a
\emph{corner} of $M$, which is the pair $c=(h,h')$ where $h'$ is the follower of $h$.
The vertex incident to $c$ is defined as the common origin of $h$ and $h'$,
and the face $f$ incident to $c$ is defined as
the face of $M$ in the sector delimited by $h$ and $h'$
(so $f$ coincides with the face incident to $h$).

Maps can also be defined in a combinatorial way. A
\emph{combinatorial map} $M$ is a connected graph $G=(V,E)$ where
one specifies a cyclic order for the set of brins (half-edges)
around each vertex. One defines facial walks of a combinatorial
map as above (note that the above definition of a facial walk as a
certain cyclic sequence of brins does not need an embedding, it
just requires the cyclic cw order of the brins around each vertex).
One obtains from the combinatorial map a topological map by
attaching a topological disk at each facial walk; and the genus
$g$ of the corresponding surface satisfies again
$2-2g=|V|-|E|+|F|$, with $F$ the number of topological disks
(facial walks), which are the faces of the obtained topological
map~\cite{Mohar_book}.

In this article we will focus on triangulations; precisely a
\emph{triangulation} is a map with no loops nor multiple edges and
with all faces of degree 3 (each face has 3 edges on its contour).

\paragraph*{Duality.}
The \emph{dual} of a (topological) map $M$ is the map $M^*$ on the same surface
defined as follows:
$M^*$ has a vertex in each face of $M$, and each edge $e$ of $M$
gives rise to a \emph{dual edge} $e^*$ in $M^*$, which connects
the vertices of $M^*$ corresponding to the faces of $M$ sharing~$e$.
Note that the adjacencies between the vertices of $M^*$ correspond
to the adjacencies between the faces of $M$. Duality for edges can
be refined into duality for brins: the dual of a brin $h$ of an
edge $e$ is the brin of $e^*$ originating from the face incident to $h$
(the face on the left of $h$ when looking toward the origin of $h$). Note that the
dual of the dual of a brin $h$ is the opposite brin of $h$.

\paragraph*{Subcomplexes.} Given a map $M$ on a surface $S$, with $V$,
$E$, and $F$ the sets of vertices, edges, and faces of $M$, a
\emph{subcomplex} $C=(V',E',F')$ of $M$ is given by subsets
$V'\subset V$, $E'\subset E$, $F'\subset F$ such that the edges
around any face of $F'$ are in $E'$ and the extremities of any
edge in $E'$ are in $V'$. The subcomplex $S$ is called connected
if the graph $G'=(V',E')$ is connected. The \emph{Euler
characteristic} of a connected subcomplex $S$ is defined as
\begin{equation}
\chi(S):=|V'|-|E'|+|F'|.
\end{equation}
%
%
%

\emph{Boundary walks and boundary corners for subcomplexes.} Note
that a connected subcomplex $C$ of $M$ naturally inherits from $M$
the structure of a combinatorial map (the brins for edges in $E'$
inherit a cw cyclic order around each vertex of $V'$). Hence one can
also define facial walks for $C$. Such a facial walk is called a
\emph{boundary walk} for $C$ if it does not correspond to a facial
walk of a face in $F'$. A boundary brin is a brin $h$ in a
boundary walk, and the corresponding \emph{boundary corner} of $C$
$b=(h,h')$ is the pair formed by $h$ and the next brin $h'$
in $C$ in cw order around the origin $v$ of $h$. Note that a
boundary corner of $C$ is not a corner of $M$ if there are brins
$h_1,\ldots,h_k$ of $M\backslash \sin$ in cw order strictly between $h$ and $h'$. These
brins are called the \emph{exterior brins} incident to $b$. By
extension, the edges to which these brins belong are called the
\emph{exterior edges} incident to $b$. The faces of $M$ incident to $v$
in cw order between $h$ and $h'$ are called the \emph{exterior
faces} incident to $b$. Recall that a facial  walk is classically
encoded by the list of brins $(b_1,\ldots,b_k)$, where $b_{i+1}$
is the opposite brin of the follower $b_i'$ of $b_i$ (for a
subcomplex $C$, it means that $b_i'$ is the next brin in $C$ after
$b_i$ in cw order around the origin of $b_i$). For a boundary
walk, one also adds to the list of brins the exterior brins in
each corner, that is, one inserts between $b_i$ and $b_{i+1}$ the
ordered list of brins of $M$ that are strictly between $b_i$ and
$b_i'$ in cw order. The obtained (cyclic) list is called the \emph{complete
list of brins} for the boundary walk. In this list the brins
$b_1,\ldots,b_k$ are called the \emph{boundary brins}, the other
ones are called the exterior brins.

\paragraph*{The topological map associated with a connected subcomplex.}
The topological map $\wh{C}$ associated with $C$ is  obtained
by attaching to each of the $k$ boundary walks a topological disk; therefore
$\chi(\wh{C})=\chi(C)+k$. The genus $g'$ of $\wh{C}$, given by $2-2g'=\chi(\wh{C})$,
is at most the genus $g$ of the surface on which $C$ is embedded.
  The $k$ faces  of $\wh{C}$
corresponding to the added disks are called the \emph{boundary faces} of $\wh{C}$;
 by a slight abuse of terminology, we call these the boundary faces of $C$.
 Note that each boundary walk of $C$ corresponds to a facial walk for a boundary
 face of $\wh{C}$.

\paragraph*{Duality for subcomplexes.}
Given $C=(V',E',F')$ a subcomplex of a map $M$,
the \emph{complementary dual} $D$ of  $C$ is the subcomplex of $M^*$
formed by the vertices of $M^*$ dual to faces in $F\setminus F'$, the edges
of $M^*$ dual to edges in $E\setminus E'$, and the faces of $M^*$ dual to vertices
in $V\setminus V'$.

\begin{lemma}[correspondence between boundary walks]\label{lem:boundary}
Let $C$ be a connected  subcomplex of a map $M$ such that
the complementary dual complex $D$ is also connected.
For a brin $h\in M$ define $\phi(h)=h^*$ if $h\in \sin$
and $\phi(h)=\mathrm{opposite}(h^*)$ if $h\notin \sin$.

If $L=(h_1,\ldots,h_k)$ is the complete  list of brins
of a boundary walk of $C$, then $\Phi(L):=(\phi(h_k),\ldots,\phi(h_1))$ is the
complete list of brins of a boundary walk of $D$.
The exterior brins
of $L$ correspond to the boundary  brins of $\Phi(L)$, and the boundary brins
of $L$ correspond to the exterior brins of $\Phi(L)$.
Since $\Phi$ is involutive,
$\Phi$ induces a bijection between the boundary faces of $C$ and
the boundary faces of $D$.
\end{lemma}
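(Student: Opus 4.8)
The plan is to prove the forward half of the statement — that $\Phi$ carries the complete list of a boundary walk of $C$ to the complete list of a boundary walk of $D$, exchanging boundary and exterior brins — by a local verification at each boundary corner of $C$, and then to deduce the bijection of boundary faces from involutivity. (Connectivity of $C$ and of $D$ is what makes the two boundary-walk structures meaningful.) Geometrically the claim is that, after thickening $C$ to a ribbon neighbourhood $N$ in $S$, the closure of $S\setminus N$ is a ribbon neighbourhood of $D$, so each boundary circle of $N$ reads off as a boundary walk of $C$ from one side and of $D$ from the other; but I would argue combinatorially so as not to have to make this picture precise.

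First I would record the bookkeeping that matches up the cell types: if $h\in C$ then the edge of $h$ lies in $E'$, so $h^*$ lies on an edge of $M^*$ dual to an edge of $E'$ and hence $h^*\notin D$; dually if $h\notin C$ then $\mathrm{opposite}(h^*)\in D$. Define $\psi$ on brins of $M^*$ by $\psi(\eta)=\eta^*$ for $\eta\in D$ and $\psi(\eta)=\mathrm{opposite}(\eta^*)$ otherwise, and define $\Psi$ from $\psi$ by the same recipe that produces $\Phi$ from $\phi$. The identities $(h^*)^*=\mathrm{opposite}(h)$ and $(\mathrm{opposite}(h))^*=\mathrm{opposite}(h^*)$ give $\psi\circ\phi=\mathrm{id}$, hence $\Psi\circ\Phi=\mathrm{id}$; moreover the complementary dual of $D$ inside $M^{**}\cong M$ is exactly $C$, which is connected. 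So it suffices to prove the forward statement: applying it also to $D$ then shows $\Psi$ sends boundary walks of $D$ to boundary walks of $C$, and since a boundary walk determines and is determined by its boundary face, $\Phi$ restricts to a bijection between the boundary faces of $C$ and those of $D$.

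For the forward statement, fix a boundary corner $b=(h,h')$ of $C$ at a vertex $v$, where $h'$ is the brin of $C$ following $h$ clockwise around $v$, and list the brins of $M$ around $v$ clockwise from $h$ to $h'$ as $h=e_0,e_1,\dots,e_\ell,e_{\ell+1}=h'$; by construction $e_1,\dots,e_\ell$ are the exterior brins inserted in this corner of the complete list $L$ and none of them lies in $C$. Let $g_t$ be the face of $M$ in the sector $(e_t,e_{t+1})$; each $g_t$ has a contour edge not in $E'$, so $g_t\in F\setminus F'$ and $g_t^*$ is a vertex of $D$, and one checks that $\phi(e_t)=\mathrm{opposite}(e_t^*)$ is the brin of $D$ running from $g_{t-1}^*$ to $g_t^*$ (the edge of $e_t$ separates $g_{t-1}$ from $g_t$). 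Now the stretch $(\dots,h,e_1,\dots,e_\ell,h^+,\dots)$ of $L$, with $h^+=\mathrm{opposite}(h')$ the next boundary brin of $C$, appears in $\Phi(L)$ as $(\dots,\phi(h^+),\phi(e_\ell),\dots,\phi(e_1),\phi(h),\dots)$, and I must verify: (i) inside a corner-block, $\phi(e_{t-1})=\mathrm{opposite}(\mathrm{follower}_D(\phi(e_t)))$ with nothing of $D$ strictly between; (ii) across the junction into the next corner $b^+$, whose first exterior brin is $f_1$, that $g_\ell$ is the face in the sector $(h^+,f_1)$, so that $\phi(e_\ell)=\mathrm{opposite}(\mathrm{follower}_D(\phi(f_1)))$ and the single brin of $M^*\setminus D$ in the intervening corner is $\phi(h^+)=(h^+)^*$. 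Both reduce to the standard relation between the facial walk of a primal face and the rotation around the corresponding dual vertex: the facial walk of $g_{t-1}$ reads $(\dots,e_{t-1},\mathrm{opposite}(e_t),\dots)$, so $\phi(e_t)$ and $e_{t-1}^*$ are clockwise-consecutive around $g_{t-1}^*$, and $e_{t-1}^*\in D$ (as $e_{t-1}\notin C$) forces $\mathrm{follower}_D(\phi(e_t))=e_{t-1}^*$; the junction is the same computation with $h^+$ in the role of $e_0$, the brin $(h^+)^*\notin D$ being skipped over exactly once. Assembling this around the whole cyclic list shows the images of the exterior brins of $L$ satisfy the facial-walk recurrence of $D$ and the images of the boundary brins of $L$ sit as the exterior brins of $\Phi(L)$ in the right corners, i.e.\ $\Phi(L)$ is the complete list of a facial walk of $D$; it is a boundary walk rather than the contour of a face of $D$ because it contains $\phi(h)=h^*\notin D$ — with the degenerate case $E'=\emptyset$ ($C$ a single vertex $v\in V'$) handled separately, $\Phi(L)$ then being the contour of the face of $M^*$ dual to $v$, which is not a face of $D$.

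I expect the main obstacle to be the orientation bookkeeping: pinning down, consistently with the paper's conventions, which cyclic orientation of a primal face's facial walk gives the clockwise rotation around the dual vertex, and making sure this meshes with the reversal built into $\Phi$. A secondary nuisance is the junction step — seeing that two consecutive boundary corners of $C$ meet, on the dual side, at the single shared vertex $g_\ell^*$, which is just the identity "$g_\ell$ is the face on the left of $\mathrm{opposite}(h')$", and that exactly one brin of $M^*\setminus D$ sits there. Everything else is routine.
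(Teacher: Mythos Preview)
The paper states this lemma without proof, treating it as a routine duality fact about subcomplexes; there is no argument in the text to compare against. Your proposal therefore has to be assessed on its own.

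Your argument is essentially correct and is the natural one. The involutivity computation via $(h^*)^*=\mathrm{opposite}(h)$ and $(\mathrm{opposite}(h))^*=\mathrm{opposite}(h^*)$ is clean, and reducing the forward direction to a local check at each boundary corner is exactly right: the key identity you use---that the facial walk of a primal face $g$ gives the cw rotation around the dual vertex $g^*$---is what makes the pieces $\phi(e_\ell),\dots,\phi(e_1)$ satisfy the facial-walk recurrence of $D$. Your observation that each face $g_t$ in the corner lies in $F\setminus F'$ (hence $g_t^*\in D$) is correct; for $g_0$ specifically it follows because $g_0$ is the face of $M$ incident to the boundary brin $h$, and a brin whose incident face lies in $F'$ cannot be a boundary brin of $C$.

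Two places deserve a bit more care. First, the junction step should also cover $\ell=0$ (a boundary corner with no exterior brins): then there is no $\phi(e_\ell)$ to hook onto, and you must check directly that $\phi(h)=h^*$ sits as a lone exterior brin of $\Phi(L)$ in the corner of $D$ between the exterior blocks coming from the two neighbouring corners of $C$; this is the same computation with $g_0=g_\ell$ the single face. Second, your sentence ``it is a boundary walk \dots\ because it contains $\phi(h)=h^*\notin D$'' is right in effect but could be stated more precisely: the facial walk underlying $\Phi(L)$ is formed by its boundary brins (the $\phi(e_t)$), and it is a boundary walk of $D$ exactly because the complete list has a nonempty exterior part, whereas the contour of a genuine face of $D$ has every corner already a corner of $M^*$ and hence no exterior brins. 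Making that implication explicit would close the argument cleanly.
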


\subsection{Handle operators}\label{sec:handle_operators}
Following the approach suggested in~\cite{LopesRSST03,LewinerLT03}, based on
Handlebody theory for surfaces, we design a new traversal strategy
for higher genus surfaces: as in the planar case, our strategy
consists in conquering the whole  graph incrementally. We use an
operator \texttt{conquer} similar to the conquest of a free vertex used
in the planar case, as well as two new
operators---\texttt{split} and \texttt{merge}---designed to
represent the handle attachments that are necessary in higher genus.
We start by setting some notations and definitions. We consider a genus $g$
triangulation $\cT$ with $n$ vertices.
In addition, we mark an arbitrary face of $\cT$, called
the \emph{root-face}.

The traversal procedure consists in growing a connected
subcomplex of $\cT$, denoted $\sin$, which is initially equal to
the root-face (together with the edges and vertices of the
root-face); and such that the complementary dual subcomplex,
denoted $\sout$, remains connected all along the traversal
procedure.



\subsubsection{Handle operator of first type}\label{sec:conquer}

\begin{definition}
A \emph{chordal edge} is an edge of $\cT\setminus\sin$
whose two brins $h_1$ and $h_2$ are exterior brins of some
boundary corners $b_1$ and $b_2$.
A boundary corner $b$ of $\sin$ is \emph{free} if no exterior edge of $b$ is a
chordal edge.
\end{definition}

We can now introduce the first operator, called $\mathtt{conquer}$
(see Figure~\ref{fig:split_merge_edges}). Given $b$ a free
boundary corner of $\sin$, $\mathtt{conquer}(b)$ consists in
adding to $\sin$ all exterior faces of $\cT$ incident to $b$, as
well as the edges and vertices incident to these faces.

\begin{figure}[th]
\centering \scalebox{1.2}{
\includegraphics{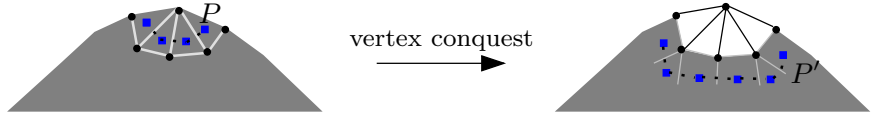}
} \caption{The effect of a conquest on $\sout$ is to delete a set
of vertices $v_1,\ldots,v_r$ together with their incident edges,
denoted by $P=v_1\rightarrow v_2\ldots v_{r-1}\rightarrow v_r$.
Call $D_{\mathrm{old}}$ the complex $D$ before conquest and call
$D_{\mathrm{new}}$ the complex $D$ after conquest. As shown in the
right picture, there is a neighboring path $P'$ disjoint from
$P$. Thanks to $P'$, any path in $D_{\mathrm{old}}$ starting and
ending out of $P$ and passing possibly by vertices and edges of
$P$ can be modified into a path with same starting and ending
vertices but not passing by $P$. Therefore $D_{\mathrm{new}}$ is
connected.} \label{fig:EffectConquest}
\end{figure}

The effect of the conquest on $\sout$ is shown in
Figure~\ref{fig:EffectConquest}; note that
 $D$ remains connected after the conquest.
In addition, the number of boundary faces of $\sin$
is unchanged, as well as the Euler characteristic (indeed, if the number of faces
transferred to $\sin$ is $k$, then the number of vertices transferred
to $\sin$ is $k-1$ and the number of edges transferred
to $\sin$ is $2k-1$). Therefore a conquer operation does
 not modify the topology of
$\sin$.


\subsubsection{Handle operators of second type}

A chordal edge $e$ for $\sin$ is said to be \emph{separating} if
its dual edge $e^*$ is a bridge of $\sout$ (a \emph{bridge} is an
edge whose removal disconnects the graph). Otherwise it is called
non-separating.

\begin{definition}[split edge]
A \emph{split edge} for $\sin$ is a
non-separating chordal edge $e$ such that the two brins of $e$
are incident to boundary corners in the same boundary face of $C$.
\end{definition}
According to the equivalence stated in Lemma~\ref{lem:boundary}, a
split edge $e$ is such that $e^*$ is not a bridge but has the same
boundary face (of $\sout$) on both sides.

We can now define the second operation, \texttt{split}, related to a split
edge $e$: double $e$ into two parallel edges delimiting a face $f$ of degree $2$,
and add  the face $f$ and the two edges representing $e$ to $\sin$.
Note that $\sout$
remains connected since $e^*$ is not a bridge. When doing the split operation,
the boundary walk at the two extremities of $e$ is split into two
boundary walks. Therefore the number of  boundary faces of $\sin$
increases by $1$.
%
%
Note that the Euler characteristic $\chi(\sin)$ decreases by $1$; indeed in $\sin$
the number of vertices is unchanged,
the number of edges increases by $2$ (addition of the split edge, which is doubled)
and the number of
 faces increases by $1$ (addition of the special face). And
 the Euler characteristic of
the map $M$ associated with $\sin$ is unchanged (when including
the boundary faces, the number of
 faces both increases by $2$, as the number of edges), hence the genus of $M$ is
also unchanged.

\begin{definition}[merge edge]
A \emph{merge edge}  for $\sin$ is a
 chordal edge having its two brins incident to boundary corners
in distinct boundary faces of $\sin$.
\end{definition}
According to Lemma~\ref{lem:boundary}, if $e$ is a merge edge,
the faces of $D$ on both sides of $e^*$ are distinct boundary faces, hence
$e^*$ cannot be a bridge of $D$, i.e., $e$ is non-separating.

We can now define the third operation, \texttt{merge}, related to a merge
edge $e$: double $e$ into two parallel edges delimiting a face $f$ of degree $2$,
and add the face $f$ and the two edges representing $e$ to $\sin$.
Note again that $\sout$ remains
connected since $e^*$ is not a bridge. When doing a merge operation, the
boundary faces
 at the two extremities of $e$ are merged into a single boundary face,
so that the number of boundary faces of $\sin$ decreases by $1$.
%
%
Similarly as for a split operation,
 the Euler characteristic $\chi(\sin)$ decreases by $1$ (addition of a doubled special  edge
 and of one special face); and
 the Euler characteristic of
the map $M$ associated with $\sin$ decreases by $2$ (when including
the boundary faces, the number of
 faces is unchanged, and the number of edges increases by $2$),
 hence the genus of $M$ increases by $1$;
informally a merge operation ``adds a handle''.


\begin{figure}[t]
\centering \scalebox{0.9}{
\includegraphics{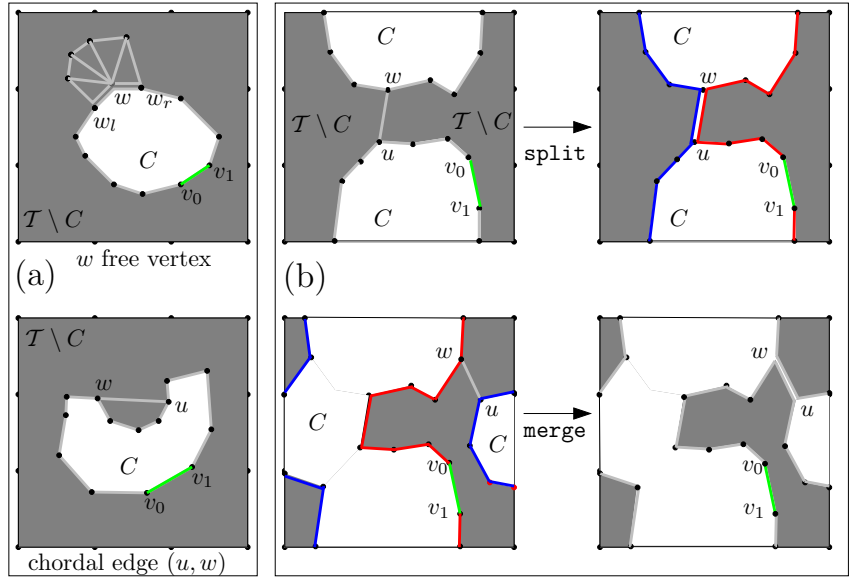}
} \caption{Illustrated on a toroidal graph, (a) the result of a
\texttt{conquer} operation, and a contractible chordal edge
$(u,w)$ (in gray); (b) the result of a \texttt{split} (respectively, \texttt{merge})
operation on a split edge $(u,w)$ (respectively, merge edge $(u,w)$).
\label{fig:split_merge_edges}}
\end{figure}


\section{Schnyder woods for triangulations of arbitrary genus}\label{sec:higher_genus}

\subsection{Definition of Schnyder Woods extended to arbitrary genus}\label{sec:g_Schnyder wood}

We give here a definition of Schnyder woods for
triangulations  that extends to arbitrary genus the definition known in
the planar case, see Figure~\ref{fig:def_grealizer} for an
example. We consider here triangulations of genus $g$ with a marked face, called
the root-face. As in the planar case, the edges and vertices are called outer
or inner whether they are incident to the root-face or not.

\begin{definition}\label{def:g_Schnyder wood}
Consider a genus $g$ triangulation $\cT$ with
$n$ vertices, and having a root-face
$f=(v_0, v_1, v_2)$ (the vertices are ordered according to a
walk along $f$ with the interior of $f$ on the right). Let $\cE$ be
the set of inner edges of $\cT$.
A \emph{$g$-Schnyder wood} of $\cT$ (also called genus $g$ Schnyder wood)
is a partition of
$\cE$  into a set of \emph{normal edges}  and a set
$\mathcal{E}^s$ of
\emph{special edges} considered as fat, i.e., each special edge is
 doubled into two edges delimiting a face of degree $2$, called a \emph{special face}.
 In addition, each edge, a normal edge or one of the two
edges of a special edge, is directed and has a
label (also called color) in $\{0,1,2\}$, so as to satisfy the following conditions:
\begin{itemize}

\item \textbf{root-face condition:}
The outer vertex $v_2$ is incident to no special edges. All inner edges
incident to $v_2$ are ingoing of color $2$.

Let $k\geq 0$ be the number of special edges incident to $v_0$ (each of these
special edges is doubled), and let $L=(e_1,f_1,e_2,f_2,\ldots,e_r,f_r)$
be the cyclic list of edges and faces incident to $v_0$ in ccw order ($f_i$
is the face incident to $v_0$ between $e_i$ and $e_{(i+1)\ \mathrm{mod}\ r}$).
A \emph{sector} of $v$ is a maximal interval of $L$ that does not contain
a special face nor the root-face. Note that there are $k+1$ sectors, which are
disjoint; the one containing the edge $\{v_0,v_1\}$ is called the \emph{root-sector}.

Then, all inner edges in the root-sector are ingoing of color $0$.
In all the other $k$ sectors, the edges in ccw order are of the form
$$
\Seq(\dar 1),\uar 0, \Seq(\dar 2),\uar 1,\Seq(\dar 0).
$$
The definitions of sectors and conditions are the same for $v_1$,
except that all edges in the root-sector are ingoing of color $1$.

\item \textbf{local condition for inner vertices:} Every
inner vertex $v$ has exactly one outgoing edge $e$ of color $2$.
Let $k$ be the number of special edges incident to $v$ (each of these
edges is doubled and delimits a special face), and
let $L=(e_1,f_1,e_2,f_2,\ldots,e_r,f_r)$
be the cyclic list of edges and faces incident to $v$ in ccw order.
A \emph{sector} of $v$ is a maximal interval of $L$ that does not contain
a special face nor the edge $e$. Note that there are $k+1$ sectors around $v$,
which are disjoint.

Then, in each sector the edges in ccw order are of the form
$$
\Seq(\dar 1),\uar 0, \Seq(\dar 2),\uar 1,\Seq(\dar 0).
$$

\item \textbf{Cut-graph condition:} The graph $T_2$ formed by the edges of color 2
is a tree spanning all vertices except $v_0$ and $v_1$, and rooted at $v_2$,
 i.e.,  all
edges of $T_2$ are directed toward $v_2$. The embedded
subgraph $G_2$ formed by $T_2$ plus the two edges $(v_0,v_2)$ and
$(v_1,v_2)$
plus the special edges (not considered as doubled here) is a cut-graph of $\cT$,
which is called the \emph{cut-graph} of the Schnyder wood.
\end{itemize}
(Note that the cut-graph condition forces the number of special edges to be $2g$.)
\end{definition}


\begin{figure}[t]
\centering \scalebox{0.95}{
\scalebox{1.4}{\includegraphics{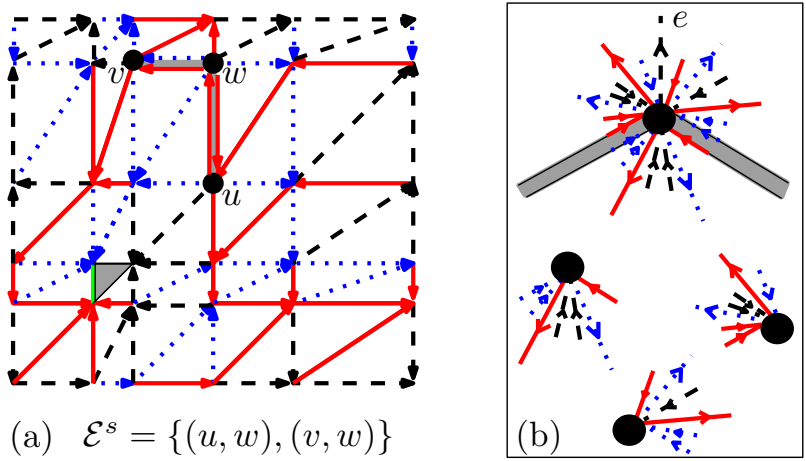} } }
\caption{(a) A toroidal triangulation endowed with a $g$-Schnyder
wood (the root-face is dashed). (b) The local condition for an
inner vertex with two special edges (each of which is doubled and
delimits a 2-sided face), below are shown the 3 sectors delimited
by the special edges and the outgoing edge of color $2$.
\label{fig:def_grealizer}}
\end{figure}


As an example, Figure~\ref{fig:def_grealizer}(a) shows a toroidal
triangulation endowed with a g-Schnyder wood.

\emph{Remark~1}.
Note that if an inner vertex $v$ is incident to no special edge, then there is
a unique sector around $v$, which is formed by all edges incident to $v$
except the outgoing one of color $2$. The local condition above
implies that the edges around $v$ are of the form
$$
(\Seq(\dar 1),\uar 0, \Seq(\dar 2),\uar 1,\Seq(\dar 0),\uar 2),
$$
as in the planar case. Since at most $4g$ vertices are incident to special edges,
our definition implies that in fixed genus, almost all inner vertices satisfy
the same local condition as in the planar case. In addition the vertices
incident to special edges satisfy a local condition very similar to the one
in the planar case.

\paragraph*{Remark~2.} The last condition, stating that $T_2$
is a tree,  is redundant in the planar case
(it is implied by the local
conditions) but not in higher genus: one easily finds an example
of structure where all local conditions are satisfied but the edges of color $2$
form many disjoint circuits.

\paragraph*{Remark~3.}
Finally, we point out (see Proposition~\ref{theo:maps} and the remark
after) that $g$-Schnyder woods (precisely, those
computed by a traversal algorithm described later on) give rise to
 decompositions into 3 spanning cellular subgraphs, one with one face
 and the two other ones with $1+2g$ faces.
 This generalizes the decomposition of a plane triangulation
 into 3 spanning trees.


\subsection{Computing Schnyder woods for any genus}

This section presents  an algorithm for traversing a triangulation
of arbitrary genus $g\geq 0$ and computing a $g$-Schnyder wood on
the way. Our algorithm naturally extends to any genus the
procedure of Brehm. As in the planar case, the traversal is a
greedy sequence of conquest operations, with here the important
difference that these operations are interleaved with $2g$
merge/split operations. Another point is that, in higher genus,
the region that is grown is more involved than in the planar case
(recall that in the planar case, the grown region is delimited by a simple cycle).
This is why we need
the more technical terminology of  subcomplex. It also turns out
that a vertex might appear several times on the boundary of the
grown complex, therefore we have to use the refined notion of free
boundary corner, instead of free vertex in the planar case (in the
planar case, a vertex appears just once on the boundary of the
grown region).

Let us now give the precise description of the traversal procedure on
a triangulation of genus $g$ with a root-face.
As in the planar case, we grow a ``region'' $\sin$. Precisely, $\sin$ is a connected
subcomplex all along the traversal.
Initially, $\sin$ is the root-face $\{v_0, v_1, v_2\}$,
together with the edges and vertices of that face; at the end,
$\sin$ is equal to $\cT$. We make use of the operation
\texttt{conquer}$(b)$---with $b$ a free boundary corner of
$\sin$---as defined in Section~\ref{sec:conquer}. Associated with
such a conquest is the \texttt{colorient} rule, similar to the
operation for free vertices described in
Section~\ref{sec:computing_planar} (planar case):

\paragraph*{colorient}
\texttt{colorient}($b$), with $b$ a free boundary
corner of $\sin$: let $v$ be the vertex incident to $b$, and let
$e$, $e'$ be the two edges delimiting $b$, with $e'$ after $e$ in cw order around $v$. Orient $e$ and $e'$
outward of $v$, giving color $1$ to $e$ and color $0$ to $e'$.
Orient all the exterior edges of $b$ toward $v$ and give color $2$
to these edges (these edges are strictly between $e$ and $e'$ in cw order around $v$).

We also make use of the handle operations \texttt{split} and
\texttt{merge}, as defined in Section~\ref{sec:handle_operators}.
Define an \emph{update-candidate} for $C$ as either a free
boundary corner, or a split edge, or a merge edge.

\smallskip
\noindent \textsc{ComputeSchnyderAnyGenus($\cT$)} ($\cT$ a triangulation of genus $g$)\\
Initialize $\sin$ as the root-face $f$ plus the vertices and edges of $f$; \\
\noindent \texttt{while $\sin\neq\cT$ find an update-candidate $\sigma$ for $\sin$} \\
\indent \texttt{If $\sigma$ is a free boundary corner $b$} \\
\indent \indent \texttt{conquer($b$);}
\indent\indent \texttt{colorient($b$);} \\
\indent \texttt{If $\sigma$ is a merge edge $e=\{u,w\}$ for $\sin$} \\
\indent \indent \texttt{merge(u,w)}; \\
\indent \texttt{If $\sigma$ is a split edge $e=\{u,w\}$ for $\sin$} \\
\indent \indent \texttt{split(u,w)}; \\
\noindent \texttt{end while} \\

Note that the above algorithm performs conquests, merge operations, and split
operations in whichever order, i.e., with no priority on the 3 types of operations.

Figure~\ref{fig:realizer_on_torus} shows the traversal algorithm
executed on a toroidal triangulation. Observe the subtlety that,
for positive genus, the vertices incident to merge/split edges
have several corners that are conquered, as illustrated in
Figure~\ref{fig:def_orientation}. Precisely, for a vertex $v$
incident to $k\geq 0$ merge/split edges, its conquest occurs $k+1$
times if $v$ is an inner vertex and $k$ times if $v\in\{v_0,v_1\}$.

Note also that, if the algorithm terminates (which will be proved
next), the number of merge edges must be $g$ and the number of
split edges must be $g$. Indeed, in the initial step, $\sin$ has
$k=1$ boundary face and genus $g'=0$, while (just before) the last
step $\sin$ has $k=1$ boundary face and genus $g'=g$. Since the
effect of each split is $\{k\leftarrow k+1,g'\leftarrow g'\}$ and
the effect of each merge is $\{k\leftarrow k-1,g'\leftarrow
g'+1\}$, there must be the same number of splits as merges (for
$k$ to be the same finally as initially) and the number of merges
must be $g$ (for $g'$ to increase from $0$ to $g$). As we will
see, these $2g$ edges  are the special edges of the Schnyder wood
computed by the traversal algorithm.

\begin{figure}[th]
\centering \scalebox{1.0}{
\includegraphics{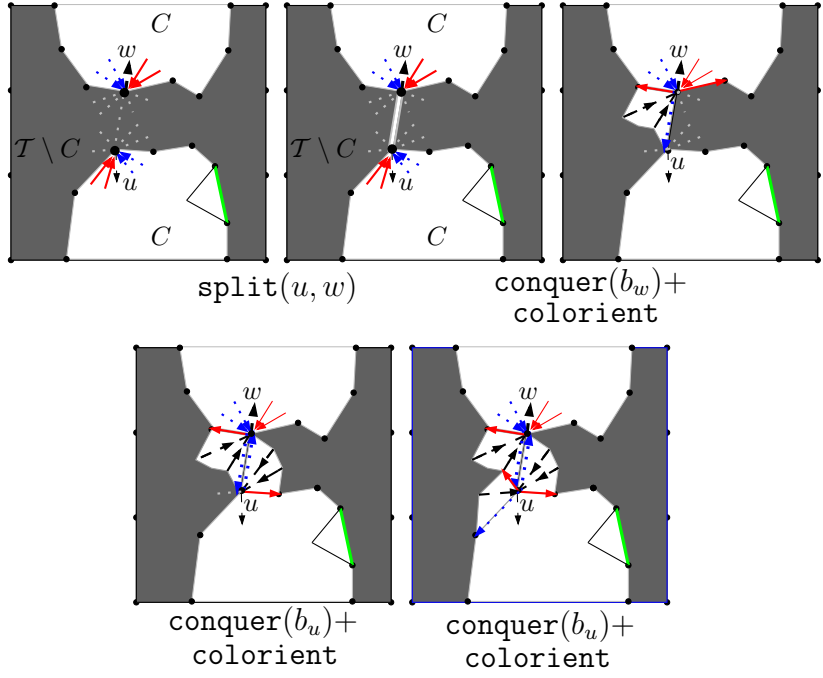}}
\caption{These pictures show the result of \texttt{colorient}
operations in the higher genus case. Any split (or merge) edge
$(u,w)$ can be directed in one or two directions (having possibly
two colors), depending on the traversal order on its extremities
(we denote by $b_w$ a boundary corner incident to vertex $w$).
\label{fig:def_orientation}}
\end{figure}

\begin{figure}[t]
\centering \scalebox{0.7}{
\includegraphics{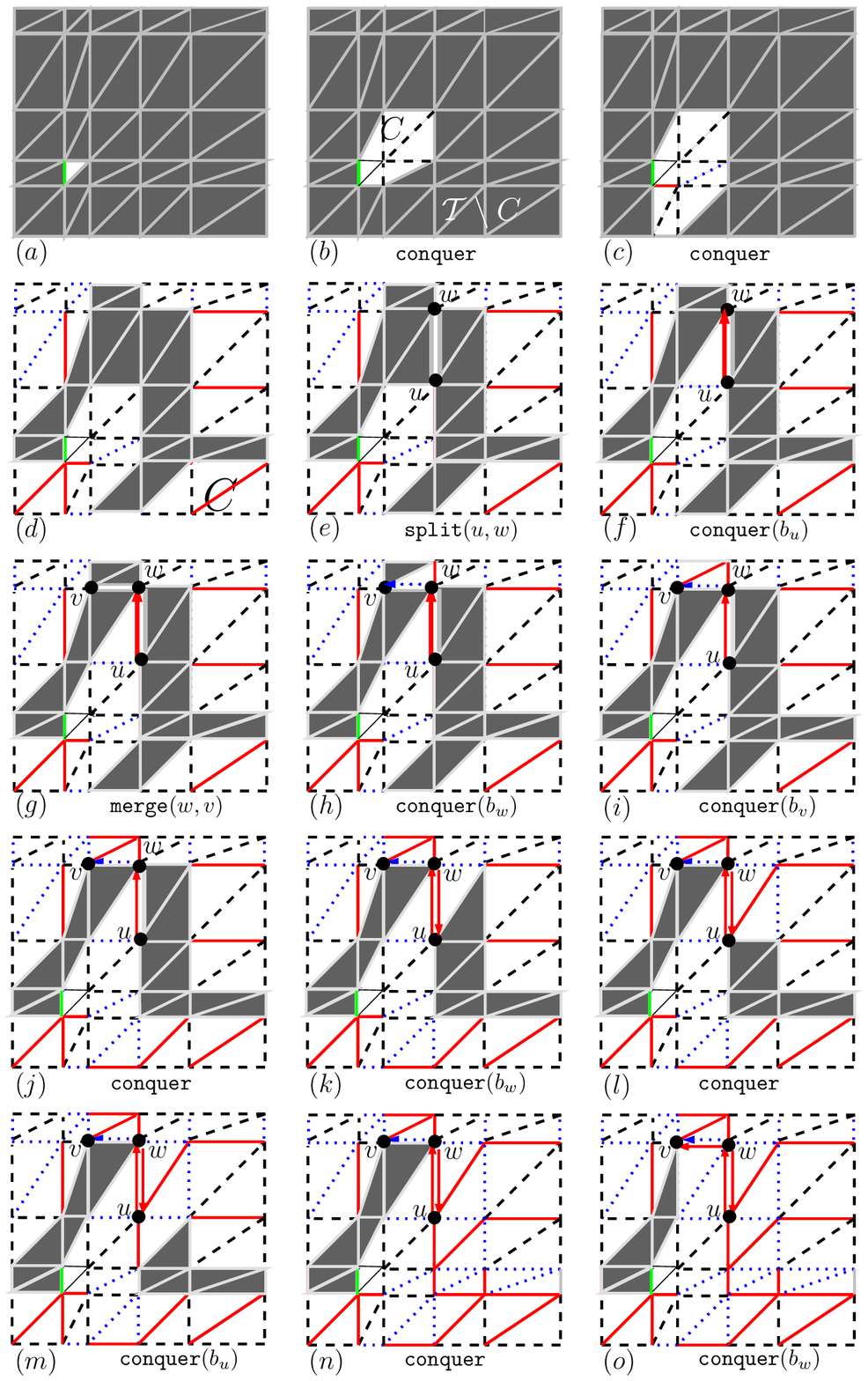}
} \caption{Execution of our traversal algorithm. (a) The traversal
starts with a conquest at the outer vertex $v_2$.
(b)-(c) As far as only $\mathtt{conquer}$ operations, (d) the area
already explored (white triangles) remains homeomorphic to a disk.
Whenever there remain no free corners, it is possible to find
\emph{split} (e) and \emph{merge} (g) edges (incident to black
circles).
%
Once the region $\cT \setminus C$ is a topological disk (h), the
traversal can be completed with a sequence of \texttt{conquer}
operations.
%
 \label{fig:realizer_on_torus}}
\end{figure}

\begin{theorem}
Any triangulation $\cT$ of genus $g$ admits a $g$-Schnyder wood,
which can be computed in time $O((n+g)g)$.
\end{theorem}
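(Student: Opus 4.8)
The plan is to establish the theorem in two parts: first that the traversal algorithm \textsc{ComputeSchnyderAnyGenus} always terminates (and in particular that whenever $\sin\neq\cT$ an update-candidate exists), and second that the labeled, oriented, fattened structure it produces satisfies all three conditions of Definition~\ref{def:g_Schnyder wood}. The complexity bound will then follow from a bookkeeping argument on the number and cost of the operations performed.

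For termination and existence of update-candidates, I would first note that every \texttt{conquer}, \texttt{split}, or \texttt{merge} strictly enlarges $\sin$ (a face of $\cT$ is transferred, or a face is created and $\sin$ gains edges/faces), and that by the topology computations recorded in Section~\ref{sec:handle_operators} these operations keep $\sin$ a connected subcomplex with $\sout$ connected; hence only finitely many steps can occur before $\sin=\cT$. The real content is: \emph{if $\sin\neq\cT$, there is always a free boundary corner, a split edge, or a merge edge}. Here I would argue on a fixed boundary face $f$ of $\wh{C}$ whose boundary walk has at least one exterior brin (such an $f$ exists since $\sin\neq\cT$). Looking at the complete list of brins of the boundary walk of $f$ together with the chordal edges incident to its corners, one gets a chord-diagram-like structure on a disk; by the planar chord-diagram fact recalled in the excerpt (Figure~\ref{fig:chordal_diag}), if no corner of $f$ is free then some chordal edge $e$ has both brins incident to corners of $f$. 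If $e$ is non-separating it is a split edge and we are done; if $e$ is separating, I would use the correspondence of Lemma~\ref{lem:boundary} applied to $\sout$ (where $e^*$ is a bridge) to descend to a smaller connected subcomplex on one side of $e^*$, and iterate. The remaining case is when a boundary walk has exterior brins incident to corners lying in two \emph{different} boundary faces — then there is a chordal edge of that type, i.e. a merge edge. Making this case analysis airtight, i.e. showing the iteration on separating edges actually terminates with a genuine update-candidate, is what I expect to be the main obstacle; it is the higher-genus analogue of the Jordan-curve-based argument that in the planar case is "obvious," and it is precisely where the subcomplex/complementary-dual machinery of Lemma~\ref{lem:boundary} must do its work.

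For correctness of the output structure, I would maintain loop invariants generalizing Brehm's planar invariants: (i) the already-colored-and-oriented edges are exactly the inner edges of $\sin$ that are not on any boundary walk; (ii) every such fully interior inner vertex satisfies the local condition of Definition~\ref{def:g_Schnyder wood} (with its $k+1$ sectors cut out by its incident special edges and its unique outgoing color-$2$ edge); (iii) for every inner vertex on a boundary walk of $\sin$, within each boundary corner at that vertex the already-assigned edges form the pattern $\Seq(\dar 1),\uar 0,\ldots,\uar 1,\Seq(\dar 0)$ read appropriately, so that when the last corner at that vertex is conquered the full local condition holds; and (iv) the color-$2$ edges oriented so far, together with $\{v_0,v_2\}$, $\{v_1,v_2\}$ and the special (doubled) edges created by splits/merges, form the subcomplex $\sin$'s "$T_2$-part" consistently. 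Checking that \texttt{colorient} preserves (i)–(iii) is the same local verification as in the planar case (the \texttt{colorient} rule on a free corner is literally Brehm's rule applied corner-wise), and checking that \texttt{split}/\texttt{merge} preserve them is immediate since those operations only double an edge and add a $2$-sided face without touching any colors/orientations. At termination $\sin=\cT$ so (i)+(ii) give the root-face and inner-vertex conditions, and the cut-graph condition follows because $G_2$ equals $\sin$ at the final step with its boundary faces collapsed: by the Euler/topology accounting in Section~\ref{sec:handle_operators}, after the $g$ merges and $g$ splits the map associated with $\sin$ has genus $g$ and $G_2$ (the color-$2$ tree plus $\{v_0,v_2\},\{v_1,v_2\}$ plus the $2g$ special edges) is spanning, cellular, and has exactly one face, i.e. a cut-graph; the fact that the color-$2$ edges form a tree on $V\setminus\{v_0,v_1\}$ rooted at $v_2$ is read off from invariant (ii) plus the observation that each non-$v_0,v_1$ inner vertex receives its outgoing color-$2$ edge exactly once. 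I should also verify that $v_0,v_1$ get exactly the sector structure demanded by the root-face condition, which again follows from tracking their repeated conquests against invariant (iii).

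For the running time, I would bound the number of iterations of the main loop: there are at most $n-2$ conquer operations plus $g$ splits plus $g$ merges, so $O(n+g)$ iterations. Each iteration must (a) find an update-candidate and (b) perform the operation and the local recoloring. Performing a conquer/split/merge and the associated \texttt{colorient} is $O(1)$ amortized in the total degree transferred, summing to $O(n)$ overall for the conquers and $O(1)$ each for the $2g$ handle operations. Finding an update-candidate is the costly part: a naive scan of the current boundary walks costs $O(n)$ per query in the worst case (the boundary can have length $\Theta(n)$), but since split/merge edges only need to be searched for when \emph{no} free corner exists — which, by the operation-count argument, happens at most $2g$ times — one can afford an $O(n+g)$ scan of all boundary walls on each of those $O(g)$ occasions, giving $O((n+g)g)$; free-corner detection in between can be maintained incrementally (a corner's freeness changes only for corners near a just-conquered region, and one keeps a queue of currently-free corners), contributing only $O(n)$ total. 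Summing, the algorithm runs in $O((n+g)g)$ time, which is linear for fixed $g$, completing the proof.
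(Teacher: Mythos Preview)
Your overall architecture (termination, local invariants, cut-graph, complexity) matches the paper's, and your treatment of the local conditions and of the $O((n+g)g)$ running time (via $2g{+}1$ periods, each costing $O(|E|)$) is essentially the paper's Lemmas~\ref{lem:local} and~\ref{lem:time_exec}. But two of your steps have genuine gaps.

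\textbf{Termination.} Your ``descend to a smaller connected subcomplex on one side of $e^*$ and iterate'' is not well-defined: you need an update-candidate for the actual $\sin$, not for an auxiliary complex, and it is unclear what the iterate is or why the process terminates with a free corner rather than an infinite chain of separating chords. The paper avoids any such descent. It fixes the boundary face $f_0$ containing $\{v_0,v_1\}$ and assumes there is no split or merge edge incident to $f_0$; then every chordal edge incident to $f_0$ is separating (else split) and has both brins in $f_0$ (else merge). The key step you are missing is that \emph{separating} chords cannot interleave along the boundary walk of $f_0$: if bridges $d^*,e^*$ of $\sout$ had their brins in a crossing pattern, the boundary-walk correspondence of Lemma~\ref{lem:boundary} would put the dual brins in a crossing pattern on the $\sout$ side, forcing $d^*$ to lie in both components of $\sout\setminus e^*$. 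Hence the unfolded boundary of $f_0$ with all its chords is an honest planar chord diagram with root edge $\{v_0,v_1\}$, and the classical fact yields a free corner directly---no iteration needed.

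\textbf{Cut-graph condition.} Here your argument has a real hole. You write that $T_2$ is a tree because ``each non-$v_0,v_1$ inner vertex receives its outgoing color-$2$ edge exactly once''; but outdegree~$1$ at every vertex only gives a functional graph, and in positive genus the local conditions alone do not exclude color-$2$ cycles (the paper says exactly this in its Remark~2). Your claim that ``$G_2$ equals $\sin$ at the final step with its boundary faces collapsed'' is also incorrect: $\sin$ becomes all of $\cT$, while $G_2$ is a proper subgraph. The paper instead maintains a separate topological invariant throughout the traversal (Lemma~\ref{lem:cutGraph}): at every step, the already-built part $\oC$ of $G_2$ is a spanning cellular subgraph of the map $\Msin$ associated with $\sin$, and the faces of $\oC$ are in bijection with the boundary faces of $\Msin$. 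This is verified operation by operation---a conquest attaches each new vertex to $\oC$ by its outgoing color-$2$ edge (so $\oC$ stays connected and spanning, with genus and face count unchanged); a split raises both face counts by one; a merge raises both genera by one and drops both face counts by one. Just before the last conquest $\Msin=\cT$ has one boundary face, so $\oC=G_2$ is spanning, cellular, of genus $g$, with one face: a cut-graph. That $T_2$ is a tree rooted at $v_2$ is then a by-product of the same inductive argument (each inner vertex is attached to the existing $\oC$ by its single color-$2$ edge at the moment it enters $\sin$), not a consequence of outdegree~$1$ alone.
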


This theorem is proved in several steps: first we show in
Lemma~\ref{lemma:termination}  that the traversal algorithm
terminates and in Lemma~\ref{lem:time_exec} that it can be implemented to run in time
$O((n+g)g)$. Then we show in Lemma~\ref{lem:local} (local conditions) and
 Corollary~\ref{cor:cut} (cut-graph
condition) that it
computes a $g$-Schnyder wood.

\subsection{Termination and complexity of the algorithm}
Here $\sin$ denotes the growing subcomplex in the traversal algorithm, and
$\sout$ denotes the complementary dual of $\sin$.

\begin{lemma}[Termination]\label{lemma:termination}
Let $\cT$ be a genus $g$  triangulation. Then at any step of \textsc{ComputeSchnyderAnyGenus}($\cT$) strictly
before termination, there is an update-candidate incident to the boundary face containing $\{v_0,v_1\}$.
Hence the procedure \textsc{ComputeSchnyderAnyGenus}($\cT$)
 terminates.
\end{lemma}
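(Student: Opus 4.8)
The plan is to reduce everything to the boundary face $F_0$ of $\sin$ that currently contains the edge $\{v_0,v_1\}$, and to argue that, as long as $\sin\neq\cT$, this face always supplies an update-candidate. Consider the boundary walk $W$ of $\sin$ that borders $F_0$. First I would dispose of the trivial cases: if $W$ has a free boundary corner, we are done (it is an update-candidate); and if $W$ has a chordal edge whose two brins are incident to corners in \emph{distinct} boundary faces, that edge is a merge edge and we are done. So we may assume every chordal edge incident to $W$ has both brins incident to corners of $F_0$ itself, and no corner of $W$ is free. Under these assumptions the exterior edges incident to the corners of $W$ form, together with $W$, a structure that is essentially a planar chord diagram: indeed, by the ``topological map associated with a subcomplex'' construction one can cap $F_0$ by a disk and look at the portion of $\cT$ sitting in (or just outside) that disk. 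The key geometric input will be the planar chord-diagram fact already recalled in the excerpt (Figure~\ref{fig:chordal_diag}): a planar chord diagram on a disk with a marked boundary edge $\{v_0,v_1\}$ always has a boundary vertex, other than $v_0,v_1$ and not incident to any chord.

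The heart of the argument is then: either some chordal edge incident to $F_0$ is non-separating, in which case (since by assumption its two brins lie in the same boundary face $F_0$) it is a split edge and we are done; or \emph{every} chordal edge incident to $F_0$ is separating, i.e.\ its dual is a bridge of $\sout$. In the latter case I would peel off these separating chords one at a time. Cutting $\sin$ along a separating chord of $F_0$ splits the ``capped'' disk into two pieces along a bridge of $\sout$; the piece \emph{not} containing the root-edge $\{v_0,v_1\}$ is a strictly smaller planar chord diagram (fewer boundary vertices), so by induction on its size it contains a boundary vertex incident to no chord of that sub-diagram. Such a vertex is incident to no exterior chordal edge at all in the original picture (any chord through it would have had to be one of the separating chords we cut along, contradiction), hence it is a free boundary corner of $\sin$ on $W$ — contradicting our standing assumption. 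Either way we have produced an update-candidate incident to $F_0$.

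Once existence of an update-candidate (strictly before termination) is established, termination of \textsc{ComputeSchnyderAnyGenus}$(\cT)$ follows by a monovariant: each \texttt{conquer} strictly decreases the number of faces of $\cT$ not in $\sin$, and each \texttt{merge}/\texttt{split} strictly decreases $\chi(\sin)$ while leaving the face count alone (both facts are recorded in Section~\ref{sec:handle_operators}); since a \texttt{merge} or \texttt{split} can only be performed when a chordal edge of the required kind exists and there are only finitely many edges, and since $\chi(\sin)$ and the exterior face count are bounded below, the loop must halt — and it can only halt at $\sin=\cT$.

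The step I expect to be the main obstacle is the reduction of the ``boundary face $F_0$ plus its incident chordal/exterior edges'' to an honest planar chord diagram, and the bookkeeping that a vertex free of chords \emph{in the sub-diagram} is genuinely free as a boundary corner of $\sin$ (i.e.\ that no exterior \emph{non-chordal} edge, and no chord routed elsewhere on the surface, can spoil freeness). This is exactly where the hypothesis ``$\sout$ stays connected'', Lemma~\ref{lem:boundary}, and the separating/non-separating dichotomy have to be combined carefully; the rest is a clean induction on the size of the diagram together with the planar chord-diagram fact of Figure~\ref{fig:chordal_diag}.
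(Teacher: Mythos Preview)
Your overall strategy matches the paper's: focus on the boundary face $f_0$ containing $\{v_0,v_1\}$, dispose of the cases where a merge or split edge is present, and in the remaining case (every chordal edge incident to $f_0$ is separating) produce a free boundary corner by reducing to the planar chord-diagram fact. Where you diverge is in the ``peeling'' induction, and this is where the plan is weaker than it needs to be.

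The induction does not buy you anything: to make sense of ``cut along a separating chord and look at the smaller piece'' you already need to know that the other chords lie entirely on one side of it in the cyclic boundary order---i.e.\ you need the non-crossing property. The paper proves non-crossing directly and cleanly, and then applies the chord-diagram fact once, with no induction. The argument is: if two separating chords $d,e$ had their brins interleaved as $(\ldots,d_1,\ldots,e_1,\ldots,d_2,\ldots,e_2,\ldots)$ in the complete brin list of $f_0$, then by Lemma~\ref{lem:boundary} the dual brins are interleaved the same way in the boundary walk of the corresponding face of $\sout$; but $d^*$ and $e^*$ are both bridges of $\sout$, and interleaving would force $d^*$ to have its two brins in different components of $\sout\setminus e^*$, which is impossible for an edge. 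That is the whole content of the step you correctly flagged as the main obstacle---and once you have it, the boundary walk of $f_0$ with its chords is literally a planar chord diagram rooted at $\{v_0,v_1\}$, so Figure~\ref{fig:chordal_diag} gives a chord-free corner immediately.

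Two minor points. First, your worry that ``non-chordal exterior edges'' might spoil freeness is unfounded: by definition a boundary corner is free exactly when none of its exterior edges is chordal, so non-chordal exterior edges are irrelevant. Second, keep in mind that in positive genus a vertex can appear several times on the boundary walk, so the ``vertex not incident to any chord'' coming out of the chord-diagram fact is really a boundary \emph{corner}; the paper's phrasing ``the corner at that vertex'' reflects this.
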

\begin{proof}
Consider the boundary face $f_0$ of $C$ containing the edge
$\{v_0,v_1\}$, at some step strictly before termination of the traversal.
Assume that there is no split edge nor merge edge incident to $f_0$ (i.e.,
no split nor merge edge has one of its two extremities incident to a boundary
corner of $f_0$):
we are going to show that, in this
case, there must be a free boundary corner incident to $f_0$.
Each chordal edge $e$ incident to $f_0$ is separating. Hence
$e$ is in fact incident to $f_0$ at its
two extremities (otherwise $e$ would be a merge edge). Consider
the complete list $L$ of brins around $f_0$, as defined in
Section~\ref{sec:subcomplexes}. Let $d$ and $e$ be any pair of
chordal edges incident to $f_0$ (provided $f_0$ has at least two
incident chordal edges). Note that $d^*$ and $e^*$ are bridges of $\sout$.

We claim that the brins $(d_1,d_2)$ of $d$ and $(e_1,e_2)$ of $e$ are not
in a crossing-configuration, i.e., cannot appear as $(\ldots,d_1,\ldots,e_1,\ldots,d_2,\ldots,e_2,\ldots)$ in $L$. Indeed, if the order was so, Lemma~\ref{lem:boundary}
would imply
that the dual brins appear as $(\ldots,e_2^*,\ldots,d_2^*,\ldots,e_1^*,\ldots,d_1^*,\ldots)$
in $\Phi(L)$. But this would imply that the dual edge $d^*$ of $d$
belongs simultaneously to the two connected components of $D\backslash e^*$.

Hence the cyclic boundary of $f_0$ (the contour of $f_0$ unfolded as a cycle)
together with its chordal edges forms a planar chord-diagram with a root-edge
$\{v_0,v_1\}$, as shown in Figure~\ref{fig:chordal_diag}.
It is well known that, in such a
diagram (as shown for instance by Brehm~\cite{Brehm_thesis}),
one can find a vertex $v\notin\{v_0,v_1\}$ not incident to any chord.
The corner at that vertex is hence free.
\end{proof}


\begin{lemma}[Execution time]\label{lem:time_exec}
The algorithm
\textsc{ComputeSchnyderAnyGenus}($\cT$) can be implemented 
to have running time $O((n+g)g)$---with $g$ the genus and
$n$ the number of vertices of $\cT$---and such that the update-candidate
is always incident to the boundary face containing $\{v_0,v_1\}$.
\end{lemma}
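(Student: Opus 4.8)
The plan is to maintain throughout the traversal a data structure for the current subcomplex $\sin$ together with its complementary dual $\sout$, and—crucially—to exploit Lemma~\ref{lemma:termination}, which guarantees that an update-candidate always lives on the single boundary face $f_0$ containing $\{v_0,v_1\}$. This localization is what makes the search cheap: rather than scanning the whole boundary of $\sin$, at each iteration I only walk the complete list of brins of the boundary walk of $f_0$. First I would describe the representation: the triangulation $\cT$ is stored as a fixed combinatorial map (brins with their cyclic orders and opposite-brin pointers), and $\sin$ is recorded by marking which brins/edges/faces have been conquered, so that the boundary walk of $f_0$ can be traced on demand by following followers and opposite brins, inserting the exterior brins in each corner. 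Duality is free, since $\sout$ is just the dual marking; testing whether a chordal edge $e$ is separating amounts to asking whether $e^*$ is a bridge of $\sout$, and testing merge vs.\ split amounts to comparing which boundary faces the two brins of $e$ touch—information that can be read off while walking $f_0$.

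Next I would bound the cost of one iteration. Walking the complete boundary list of $f_0$ costs $O(\ell)$ where $\ell$ is its current length (counting exterior brins); along this walk I detect free boundary corners directly, and I detect chordal edges and record, for each, the identity of the boundary face(s) its brins are incident to. A merge edge (two distinct boundary faces) or a split edge (same boundary face, non-separating) is then identified in the same pass; the only remaining subtlety is distinguishing split from separating chordal edges on $f_0$, which requires a bridge test in $\sout$. I would argue this is handled by maintaining the block-cut structure of $\sout$ incrementally (each conquer deletes a path of vertices from $\sout$, each split/merge deletes one dual edge), or, more bluntly, by observing that a bridge test on $\sout$ costs $O(|\sout|)=O(n+g)$ and that split/merge edges are sought only $2g$ times, so their total cost is $O((n+g)g)$. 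Finally, performing the chosen operation—conquer, split, or merge—and updating the markings costs time proportional to the local change, which telescopes: across the whole run, conquer operations transfer each face of $\cT$ into $\sin$ exactly as often as it is conquered, i.e.\ $O(n+g)$ face-transfers in total (a vertex incident to $k$ special edges is conquered $k+1$ times, contributing the extra $O(g)$).

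The remaining point is bounding $\sum \ell$, the total length of the boundary walks traced. Between two consecutive split/merge operations only conquer operations occur, and during such a phase $\sin$ stays homeomorphic to a disk with a single boundary cycle on which each vertex appears once, exactly as in the planar case; a standard amortization (each conquer removes a free corner and adds back at most the corners of the newly exposed vertices, and each vertex is exposed $O(g+1)$ times overall) gives that the total boundary-walk length traced within one phase is $O(n+g)$. Since there are $2g+1$ phases, $\sum \ell = O((n+g)g)$. Adding the $O((n+g)g)$ contribution from the $2g$ bridge tests and the $O(n+g)$ contribution from the operations themselves yields the claimed $O((n+g)g)$ bound; and by construction every update-candidate used is incident to the boundary face of $\{v_0,v_1\}$, which is legitimate precisely because Lemma~\ref{lemma:termination} guarantees one always exists there. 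The main obstacle I anticipate is the split/separating distinction: making the bridge tests in $\sout$ genuinely cheap would need an incremental 2-edge-connectivity structure, but since these tests are invoked only $O(g)$ times, the crude $O(n+g)$-per-test bound already suffices and I would take that route to keep the argument clean.
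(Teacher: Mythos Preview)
Your amortization of $\sum \ell$ does not go through. The argument you give---each conquer removes a free corner and adds back the corners of newly exposed vertices, each vertex exposed $O(g+1)$ times---bounds the total number of corner \emph{insertions} into the boundary over a phase. It does not bound $\sum_i \ell_i$, which charges each corner once for \emph{every} iteration during which it sits on the boundary. A corner inserted early and conquered late contributes to many $\ell_i$. Already in the planar case (a single phase) $\sum_i \ell_i$ can be $\Theta(n^2)$: if the first conquest exposes $\Theta(n)$ vertices, the boundary length stays $\Theta(n)$ for $\Theta(n)$ subsequent steps. So walking the full boundary of $f_0$ at every iteration, as you propose, does not give $O((n+g)g)$.

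The paper avoids this by never walking the boundary to find a free corner. It maintains incrementally a list of free boundary corners on $f_0$, together with a counter at each boundary corner for the number of incident chordal edges. After a conquest, only the edges ``uncovered'' by that conquest can change chordal status; each edge is uncovered at most twice over the whole run, so updating the counters costs $O(|E|)$ total per period, and popping the next free corner is $O(1)$. The expensive global scans---searching $\sout$ for a merge edge, and running Tarjan's bridge-finding on $\sout$ to locate a split edge---happen only when the free-corner list empties, i.e.\ at the $2g$ period boundaries, each in $O(|E|)$. Your periodization and your handling of the bridge tests are fine; what is missing is this incremental free-corner bookkeeping in place of the per-iteration boundary walk. (A side remark: after the first merge, $\sin$ has positive genus, so it is not ``homeomorphic to a disk'' in later phases; but this is secondary to the amortization issue.)
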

\begin{proof}
At each step, call $f_0$ the boundary face of $\sin$
containing $\{v_0,v_1\}$ and call $f_0^*$ the corresponding boundary face of $\sout$.
Note that there are $2g$ merge/split operations during the execution of the
algorithm. Accordingly, the execution time consists of $2g+1$ \emph{periods}:
each of the $2g$ first periods ends with a merge/split, and the last period finishes
the traversal. To prove that the execution time is $O((n+g)g)$, it is enough to show that each period can be implemented to run in
time $O(|E|)$, with $|E|$ the number edges of the triangulation
(by the Euler relation, $|E|$ is $O(n+g)$).
Our implementation here chooses always an update-candidate
incident to $f_0$ and gives priority to free boundary corners over split and merge edges.

We manipulate maps using the half-edge data-structure; each brin has several pointers:
to the incident vertex, the incident face, the opposite brin, the following brin,
and the dual brin.
There are fixed half-edge data structures
 for the triangulation $\cT$ and for its dual $\cT^*$,
and there are evolving half-edge data-structures for $\sin$ and
for the complementary dual $\sout$. Each brin of $\sout$ incident
to a boundary face is dual to a brin exterior to a boundary corner
of $\sin$. Accordingly such a brin of $\sout$ has an additional
pointer to the corresponding boundary corner of $\sin$ (a boundary
corner of $\sin$ is identified with a boundary brin of $\sin$) .
And the brins of $\sout$ that are on an edge with a boundary face
on both sides have a flag indicating this property; the dual of
these edges are precisely the chordal edges for $\sin$. The
boundary corners of $\sin$ have an additional parameter indicating
the number of incident chordal edges. Hence, those that have this
parameter equal to $0$ are the free boundary corners (except for
the two corners at each extremity of $\{v_0,v_1\}$). The free boundary corners
 incident to $f_0$ are
stored in a list. As long as this list
is not empty, one chooses the free boundary corner at the head of the list and performs
the conquest/colorient operations.
%
After performing a conquest, as shown in Figure~\ref{fig:EffectConquest},
some edges of $\sout$ are deleted and some faces $f_1,\ldots,f_r$
of $\sout$ are merged with a boundary face of $\sout$. The edges
of $f_1,\ldots,f_r$ that are not deleted are called
\emph{uncovered} by the conquest. Note that the only edges that
might change status (i.e., become chordal) are the uncovered
edges. If an uncovered edge $e$ becomes chordal (i.e., has now a
boundary face of $\sout$ on both sides),
 one updates the status of $e$ as chordal,
and accordingly one increments the parameter for the number of incident chordal edges
 of the boundary corners (for $\sin$) at the two extremities of the dual edge of $e$.
Since an edge can be uncovered by at most two conquests and since the number
of operations performed on an uncovered edge is constant, the complexity of
updating the half-edge data structures over the whole period is $O(|E|)$.

At the end of a period, there is no free boundary corner incident to $f_0$.
Hence, by Lemma~\ref{lemma:termination},
either the algorithm directly terminates, or there is a merge or split edge
incident to $f_0$.
To check for a merge edge incident to $f_0$, one
scans the edges of $\sout$. If there is an edge $e\in\sout$ having distinct boundary faces
on both sides and one of these faces is $f_0^*$, then one
performs a merge operation at $e$, which finishes the period.
Note that scanning all edges of $\sout$ in search of merge edges takes
time $O(|E|)$.

If the traversal is not finished and one finds no merge edge
incident to $f_0$, then by Lemma~\ref{lemma:termination} there
must be a split edge incident to $f_0$, i.e., an edge of $\sout$
that is not a bridge but has $f_0^*$ on both sides. One can find
all the bridges of $\sout$ in $O(|E|)$ time using the depth-first
search principles of Tarjan~\cite{Tarjan72,Tarjan74}. Then one
looks for a non-bridge edge $e$ of $\sout$ with $f_0^*$ on both
sides, and performs a split operation at $e$, which finishes the
period. Again this scanning process in search of a split edge
takes time $O(|E|)$. \end{proof}


\subsection{The local conditions}
We introduce some invariants on the colors
and directions of the edges of a genus $g$ triangulation $\cT$
 that remain satisfied all along the
traversal and ensure that the computed structure is a $g$-Schnyder
wood.

In order to describe the invariants, we need to introduce some terminology.
First we recall that the special edges are ``fat'', i.e., considered as two parallel
edges that delimit a face of degree 2 (this face is part of $\sin$ as soon as
the special edge is in $\sin$).
Given a vertex $v\in\sin$, let $L=(e_1,f_1,e_2,f_2,\ldots,e_r,f_r)$ be the sequence
of edges and faces (which are either triangular or special)
incident to $v$ in ccw order around $v$. In this list, the faces
that are special (2-sided) are only those for special edges that are \emph{already}
in $\sin$. Let us first introduce two invariants that are easily checked
to remain satisfied all along the traversal:
\begin{itemize}
\item
The edges already colored and directed are those whose two incident faces are
in $\sin$ (we include the special faces for the special edges already in $\sin$).
\item
Each inner vertex $v\in\sin$ has a unique outgoing edge of color $2$;
the outer vertices do not have any outgoing edge
of color $2$.
\end{itemize}

At each step, let $k$ be the number of special edges of $\sin$ incident to $v\in\sin$.
If $v$ is an inner vertex of $\cT$, define a sector as a maximal interval of $L$ that
contains no special face nor the outgoing edge $e$ of color $2$. Note that
$v$ has $k+1$ sectors, which are disjoint. A sector is called filled if all its faces
are in $\sin$. We introduce the following invariants:
\begin{itemize}
\item
Both faces incident to $e$ are in $\sin$.
\item
The edges in each filled sector are in ccw order:
$$
\Seq(\dar 1),\uar 0, \Seq(\dar 2),\uar 1,\Seq(\dar 0).
$$
\item
In each non-filled sector the faces not in $\sin$ form an interval $I$ of faces around $v$.
In ccw order in the sector, the directed/colored
edges of $\sin$ before $I$ are ingoing of color $1$,
and the directed/colored edges of $\sin$ after $I$ are ingoing of color $0$.
\end{itemize}

Similarly we define an invariant for $v_2$ (which is true from the first
conquest):
\begin{itemize}
\item
All inner edges incident to $v_2$ are non-special and are ingoing of color $2$.
\end{itemize}

Finally we define invariants for $v_0$ (and similarly for $v_1$).
At each step, let $k$ be the number of special edges of $\sin$ that are incident to $v_0$.
Let $L=(e_1,f_1,e_2,f_2,\ldots,e_r,f_r)$ be the sequence
of edges and faces (which are triangular or special)
incident to $v_0$ in ccw order around $v_0$ (again, the special
faces are those for special edges already in $\sin$).
Define a sector as a maximal interval of $L$ that
contains no special face nor the root-face. Note that
$v$ has $k+1$ sectors, which are disjoint; the one containing the edge $\{v_0,v_1\}$
is called the root-sector. Again a sector is called filled if all its faces
are in $\sin$. We introduce the following invariants:
\begin{itemize}
\item
In each sector the faces not in $\sin$ form an interval $I$ of faces around $v_0$.
\item
The non-root face incident to $\{v_0,v_1\}$ is never in $\sin$ strictly before
termination.
Hence the root-sector is never filled strictly before termination. All the colored/directed edges in the root-sector
are going toward $v_0$ and have color $0$.
\item
The edges in each filled non-root sector are in ccw order:
$$
\Seq(\dar 1),\uar 0, \Seq(\dar 2),\uar 1,\Seq(\dar 0).
$$
\item
In ccw order in a non-filled non-root sector, the directed/colored
edges of $\sin$ before $I$ are ingoing of color $1$,
and the directed/colored edges of $\sin$ after $I$ are ingoing of color $0$.
\end{itemize}
\noindent The invariants are the same for $v_1$, except that the colored/directed edges in the root-sector
are going toward $v_1$ and have color $1$.

One easily checks that these invariants remain satisfied after each conquest, split,
or merge operation.

\begin{lemma}\label{lem:local}
The structure computed by \textsc{ComputeSchnyderAnyGenus}($\cT$)
satisfies the local conditions of a $g$-Schnyder wood.
\end{lemma}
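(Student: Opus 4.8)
The plan is to prove Lemma~\ref{lem:local} by induction on the number of operations carried out by \textsc{ComputeSchnyderAnyGenus}($\cT$), keeping the whole list of invariants stated just above valid at each step, and then reading the two local requirements of a $g$-Schnyder wood---the root-face condition and the local condition for inner vertices of Definition~\ref{def:g_Schnyder wood}---off those invariants at the step where the traversal stops (it does stop, by Lemma~\ref{lemma:termination}).

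For the base case, $\sin$ is initially the root-face with its three edges and three vertices, so no inner edge is colored or directed, no inner vertex lies in $\sin$, and $v_0,v_1,v_2$ carry no colored incident inner edge; every invariant then holds either vacuously or trivially, except for the $v_2$-invariant and the root-sector invariants at $v_0,v_1$, which are put in place by the distinguished first conquest (always performed at $v_2$). The inductive step consists in checking, operation by operation, that \texttt{conquer}/\texttt{colorient}, \texttt{split} and \texttt{merge} preserve all the invariants. For \texttt{conquer}($b$) followed by \texttt{colorient}($b$), with $b$ a free boundary corner at a vertex $v$: the faces entering $\sin$ are exactly the exterior faces of $b$, which form the single interval $I$ of not-yet-conquered faces of the relevant sector of $v$, so that sector becomes filled; \texttt{colorient} assigns colors $1$, $0$ and $2$ to the two edges delimiting $b$ and to its exterior edges in precisely the positions demanded by the filled-sector pattern $\Seq(\dar 1),\uar 0,\Seq(\dar 2),\uar 1,\Seq(\dar 0)$; at each neighbour $u$ of $v$ the freshly colored edge is attached just before $u$'s own interval $I$ (as an edge ingoing of color $1$) or just after it (ingoing of color $0$); and the unique outgoing color-$2$ edge of $v$ is exactly the one born at the earlier conquest that first brought $v$ into $\sin$, when $v$ was an exterior vertex of the conquered corner. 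For \texttt{split} / \texttt{merge} at a chordal edge $e=\{u,w\}$: no edge changes color or orientation, and around each of $u$ and $w$ the only change is that the sector carrying the single (exterior) brin of $e$ is split in two by the newly inserted two-sided special face; since that brin lay inside the interval $I$ of missing faces of the sector, each of the two new sectors is non-filled, its own interval of missing faces is an initial or a final segment of the old $I$, and the colored edges before (resp.\ after) it are still ingoing of color $1$ (resp.\ $0$). The effect of \texttt{split} / \texttt{merge} on the number of boundary faces and on the genus is as already recorded in Section~\ref{sec:handle_operators}, and Lemma~\ref{lem:boundary} ensures these operations are well defined; the remaining invariants, in particular the $v_2$-invariant, are untouched or updated in the same routine way.

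To conclude, at the stopping step $\sin=\cT$, so every triangular face and every special face lies in $\sin$ and hence every sector around every vertex is filled. For an inner vertex $v$ incident to $k$ special edges the invariants exhibit $k+1$ disjoint sectors, each reading $\Seq(\dar 1),\uar 0,\Seq(\dar 2),\uar 1,\Seq(\dar 0)$ in ccw order and separated by the $k$ special faces together with the unique outgoing color-$2$ edge of $v$; this is exactly the local condition for inner vertices. For $v_2$ the invariant directly says that all its incident inner edges are non-special and ingoing of color $2$, i.e.\ the root-face condition at $v_2$. For $v_0$ (and symmetrically $v_1$) the non-root sectors are filled with the standard pattern, and it only remains to examine the final conquest: it is necessarily the operation that first puts into $\sin$ the non-root triangle incident to $\{v_0,v_1\}$ (this triangle is kept out of $\sin$ by an invariant strictly before termination), hence it is performed at the third vertex of that triangle; it fills the interval $I$ of the root-sector, and since the root-sector invariant has been maintained up to and including this conquest, every inner edge of the root-sector is ingoing of color $0$. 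That is the root-face condition at $v_0$ and $v_1$.

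The step I expect to be the main obstacle is the inductive verification of \texttt{split} and \texttt{merge}: one must describe precisely how inserting a two-sided special face subdivides, around each endpoint of the special edge, the sector that carried its brin, and then confirm that the already assigned colors still realize the ``ingoing-$1$ before $I$, ingoing-$0$ after $I$'' pattern in both halves (with the new boundary faces matching up via Lemma~\ref{lem:boundary}). The companion delicate point is the very last conquest at $v_0$ and $v_1$ discussed above, where one must check that adding the last missing faces of the root-sector to $\sin$ creates no color-$1$ or color-$2$ edge incident to $v_0$ or $v_1$, so that the root-sector passes from ``never filled, all incident colored edges ingoing of color $0$'' to ``filled, all incident inner edges ingoing of color $0$''.
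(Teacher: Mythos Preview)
Your proposal is correct and follows essentially the same approach as the paper: maintain the list of invariants stated just above the lemma by induction over the sequence of \texttt{conquer}/\texttt{colorient}, \texttt{split}, and \texttt{merge} operations, and then read off the root-face and inner-vertex local conditions once $\sin=\cT$. The paper compresses the entire argument into one sentence (relying on the earlier assertion that ``one easily checks that these invariants remain satisfied after each conquest, split, or merge operation''), whereas you spell out the base case, the effect of each operation on the sectors, and the special role of the final conquest at the third vertex of the non-root triangle incident to $\{v_0,v_1\}$; but the skeleton is identical.
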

\begin{proof}
At the end, the fact that the invariants are satisfied directly
implies that the local conditions for edge directions and colors
of a $g$-Schnyder wood are satisfied.
\end{proof}


\subsection{The cut-graph property.}
Let $\cT$ be a genus $g$ triangulation on which the traversal
algorithm is applied. Let $G_2$ be the graph formed by the edges
of color $2$, the two edges $\{v_1,v_2\}$ and
$\{v_0,v_2\}$, and the $2g$ special edges, not considered as doubled here.

\begin{lemma}\label{lem:cutGraph}
At each step strictly before the end of the traversal algorithm, let $\Msin$ be the
map associated with $\sin$ and let $\oC$ be the embedded subgraph
of $G_2$ consisting of the edges and vertices of $G_2$ that are in $\sin$.

Then $\oC$ is a cellular spanning subgraph of $\Msin$. In addition there
is a natural bijection between the faces of $\oC$ and the boundary faces of $\Msin$:
each boundary face of $\Msin$ is included in a unique face of $\oC$.\end{lemma}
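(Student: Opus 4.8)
The statement is an invariant, so the natural strategy is induction on the traversal steps. I would track the pair $(\oC, \Msin)$ and show that the three assertions---(i) $\oC$ spans all vertices of $\Msin$, (ii) $\oC$ is cellular in $\Msin$, and (iii) each boundary face of $\Msin$ is contained in a unique face of $\oC$, giving the bijection---are preserved by each of the three operations \texttt{conquer}, \texttt{split}, \texttt{merge}. The base case is the initial step where $\sin$ is the root-face: here $\Msin$ is a single triangle (bounded by $\{v_0,v_1\}$, $\{v_0,v_2\}$, $\{v_1,v_2\}$) viewed as a sphere with one boundary face, and $\oC$ consists of the two edges $\{v_0,v_2\}$ and $\{v_1,v_2\}$ (no color-2 edges and no special edges yet), which is a tree spanning the three vertices, cellular in $\Msin$ with one face, and that face is the boundary face. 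So all three assertions hold initially.

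For the inductive step I would use the local conditions already established via Lemma~\ref{lem:local}, together with the topological bookkeeping from Section~\ref{sec:handle_operators}. For \texttt{conquer}($b$) at a vertex $v$: by the \texttt{colorient} rule, conquering $b$ creates exactly one new outgoing color-2 edge at $v$ (to its right neighbor after the colorient relabeling), while every other exterior vertex brought into $\sin$ already has, or acquires, its unique color-2 outgoing edge pointing toward the boundary; I would argue that in $\Msin$ this attaches the newly-covered region to $\oC$ without creating a new face of $\oC$ and without disconnecting anything---because the color-2 edges form (by the invariants) a structure directed toward the boundary, so every newly added vertex connects via color-2 edges back to the part of $\oC$ already present, preserving spanning and cellularity, and the number of boundary faces is unchanged (as noted, \texttt{conquer} does not change the topology of $\sin$), so the bijection is maintained. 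For \texttt{split} at a split edge $e$: the edge $e$ is added to $G_2$ as a special edge (not doubled in $G_2$), $\sin$'s boundary-face count goes up by one, and the map $M$ keeps the same genus; I would check that adding the single edge $e$ to $\oC$ inside the one face of $\oC$ that contained both incident boundary corners (they lie in the same boundary face of $C$ by definition of split edge) splits that face of $\oC$ into two, matching the split of the boundary face of $\Msin$, so the bijection survives and $\oC$ stays cellular and spanning. For \texttt{merge} at a merge edge $e$: here $e$ joins two distinct boundary faces into one, the genus of $M$ goes up by one, and adding $e$ to $\oC$ merges the two corresponding faces of $\oC$ into one (this is a handle attachment, so the Euler characteristic of $\oC$ drops appropriately); again the face-count of $\oC$ and the boundary-face-count of $\Msin$ move in lockstep, and connectivity/spanning are unaffected since no vertex is added.

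The crux---and the step I expect to be the main obstacle---is verifying cellularity of $\oC$ in $\Msin$ through the \texttt{conquer} step, i.e., that the color-2 edges plus the two root edges plus the already-added special edges really do cut $\Msin$ into exactly its boundary faces, with no extra face appearing. The local conditions give that each inner vertex has a unique outgoing color-2 edge, but to rule out a spurious cycle of color-2 edges enclosing a region of $\Msin$ (cf. Remark~2, where exactly this can go wrong for an arbitrary structure satisfying only local conditions), I would lean on the precise relationship between $\oC$ and the boundary: I would show that every face of $\Msin$ not yet in a boundary face is triangular and incident to the boundary in a controlled way, so that walking along the color-2 edges from any newly conquered vertex reaches the boundary in finitely many steps without looping. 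Concretely, the argument is that a directed cycle in the color-2 edges would, by the local "unique outgoing" condition and the way \texttt{colorient} orients toward the current boundary, have to separate $\Msin$---but then one of its sides would contain no boundary face, contradicting that every face of $\Msin$ is either a triangle of $\cT$ or a boundary face and that the region interior to such a cycle would have been conquered only by passing through its own boundary. Making this separation argument rigorous in the subcomplex setting (where a vertex can appear several times on the boundary) is the delicate point; once it is in place, the face count follows from Euler's relation applied to $\oC$ inside $\Msin$, matching $\chi(\wh C)$ as computed in Section~\ref{sec:subcomplexes}, and the "unique face of $\oC$" part of the bijection is immediate because the boundary walks of $\Msin$ refine the facial walks of $\oC$.
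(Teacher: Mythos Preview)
Your inductive strategy and your treatment of the base case, \texttt{split}, and \texttt{merge} are essentially what the paper does. The difficulty you flag in the \texttt{conquer} step, however, is not a real obstacle, and your description of that step contains a small error that obscures the simple reason why.

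First the error: in \texttt{colorient}$(b)$ the vertex $v$ incident to $b$ does \emph{not} acquire an outgoing color-$2$ edge. It acquires outgoing edges of colors $0$ and $1$ (to its two boundary neighbours); the color-$2$ edges created are the exterior edges of $b$, each directed \emph{toward} $v$ from one of the newly added vertices. Thus every new vertex entering $\sin$ at this step receives exactly one outgoing color-$2$ edge, and that edge goes to $v$, which is already in $\oC$.

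This is the whole point. A conquest adds $k-1$ new vertices and $k-1$ new color-$2$ edges to $\oC$, forming a star attached at the existing vertex $v$. Attaching a tree to a graph embedded in a surface changes neither its connectivity, nor its genus, nor its number of faces. Since the conquest also leaves the genus and boundary-face count of $\Msin$ unchanged (Section~\ref{sec:handle_operators}), the equality of genera between $\oC$ and $\Msin$ is preserved, and together with connectivity and spanning this is exactly what ``cellular spanning subgraph'' means. The paper states this criterion up front (``$\oC$ is a cellular spanning subgraph of $\Msin$ iff it is connected, spanning, and has the same genus as $\Msin$'') and then just tracks genus and connectivity through each operation.

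So the ``spurious color-$2$ cycle'' you worry about cannot arise: inductively $\oC$ has no such cycle before the conquest, and attaching a star cannot create one. You do not need any separation argument, any appeal to the local conditions of Lemma~\ref{lem:local}, or any analysis of directed walks toward the boundary. The delicate point you anticipated dissolves once you see that the update to $\oC$ in a conquest is just a tree-attachment.
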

\begin{proof}
First let us observe that $\oC$ is a cellular spanning subgraph of $\Msin$
iff it is connected, spanning, and has the same genus as $\Msin$.

The property is true initially. Indeed, $\sin$ is the root-face,
which is planar, so $\Msin$ is the triangulation of the sphere with one inner face and
one root-face, which plays the role of the boundary face;
whereas $\oC$ consists of the two edges $\{v_1,v_2\}$
and $\{v_0,v_2\}$, so $\oC$ is a spanning tree of $\Msin$.

Let $k$ be the number of boundary faces of $\Msin$,
which is also the number of faces of $\oC$, and let $g'$ be the
common genus of $\Msin$ and $\oC$ before an operation is performed.
Let us prove that the property stated in the lemma remains true after the
operation, whether a conquest (except the last conquest), a merge, or a split.

Consider a conquest of a free boundary corner $b$, strictly before
the very last conquest (which closes $\sin$).
The new vertices appearing in $\sin$ are connected to the former graph $\oC$
by an outgoing edge of color $2$ in the new graph $\oC$,
hence $\oC$ is still a connected spanning subgraph of $\sin$
after the conquest.  Note also that the genera of $\Msin$ and $\oC$ are unchanged (these two numbers stay equal to $g'$).
Similarly the number of boundary faces of $\Msin$ and the number
of faces of $\oC$ are
 unchanged (these two numbers stay equal to $k$). Finally, as shown
 in Figure~\ref{fig:ProofLem_conquest}, the boundary face of $\Msin$
incident to $b$ is still contained in the corresponding
 face of $\oC$ after the conquest. Hence the property stated in the lemma
 remains true after a conquest.

Now let us consider a split operation. The new split edge
``splits'' a boundary face of $\Msin$ into two faces $f_1$ and
$f_2$, and in the same way splits the corresponding face of $\oC$
into two faces $f_1'$ and $f_2'$ such that $f_1'$
contains $f_1$ and $f_2'$ contains $f_2$. Thus the correspondence
between boundary faces of $\Msin$ and faces of $\oC$ remains true.
In addition, the genera of $\Msin$ and of $\oC$ remain unchanged, equal to $g'$,
hence $\oC$ remains a cellular subgraph of $\Msin$, and is still spanning
(no vertex is added to $\Msin$ nor to $\oC$).
Hence the property remains true
after a split.

Finally consider a merge. The new merge edge ``merges'' two
boundary faces $f_1$ and $f_2$ of $\Msin$ into a single face,
thereby adding a handle (informally, the handle serves to
establish a bridge so as to connect and merge the two faces).
Doing this the two corresponding faces $f_1'$ and $f_2'$ of $\oC$
are also merged into a single face that contains the merger of
$f_1$ and $f_2$, see Figure~\ref{fig:TwoCycles}. Thus the correspondence
between boundary faces of $\Msin$ and faces of $\oC$ remains true.
In addition, the genera of $\Msin$ and of $\oC$ both increase by $1$,
they are equal to $g'+1$ after the merge, so $\oC$ remains a cellular
subgraph of $\Msin$, and is still spanning
(no vertex is added to $\Msin$ nor to $\oC$).
Hence the property remains true
after a merge.
\end{proof}

\begin{corollary}\label{cor:cut}
The graph $G_2$ is a cut-graph of $\cT$.
\end{corollary}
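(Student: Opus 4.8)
The plan is to deduce the statement from Lemma~\ref{lem:cutGraph}, applied at the very last moment at which it is available. Write $\sin^-$ for the subcomplex $\sin$ just before the last operation of \textsc{ComputeSchnyderAnyGenus}($\cT$). Recall that at that moment all $2g$ merge/split operations have already been carried out, so that $\sin^-$ has exactly one boundary face and genus $g$; in particular $\Msin=\wh{\sin^-}$ is a map on the genus $g$ surface underlying $\cT$. Lemma~\ref{lem:cutGraph} then says that $\oC$ --- the subgraph of $G_2$ formed by the vertices and edges of $G_2$ lying in $\sin^-$, each special edge being counted once --- is a cellular spanning subgraph of $\Msin$ whose unique face corresponds to the unique boundary face of $\sin^-$. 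Equivalently, $\oC$ is connected, meets every vertex of $\sin^-$, and its complement on the surface is a single open disk.

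So it remains to see that $\oC$ is in fact all of $G_2$ and that $\wh{\sin^-}$ carries the map $\cT$, which upgrades ``cellular spanning subgraph of $\Msin$ with one face'' to ``cut-graph of $\cT$''. The surfaces are the same (the traversal runs on the fixed surface of $\cT$), so the crux is the following claim: \emph{the last operation is a conquest that transfers a single triangle}. Granting this, the final \texttt{colorient} step merely assigns colours $1$ and $0$ to the two non-root edges of that triangle --- which is the non-root face incident to $\{v_0,v_1\}$, by the invariants --- and creates no new edge of colour $2$ and no new vertex. Hence every colour-$2$ edge of $G_2$, and every vertex of $\cT$, already appears in $\sin^-$; since moreover $\{v_0,v_2\}$ and $\{v_1,v_2\}$ lie in the root-face and all $2g$ special edges are in $\sin^-$ (all handle operations being done), we get $V(\sin^-)=V(\cT)$ and $\oC=G_2$. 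Undoubling the $2g$ special edges turns $\wh{\sin^-}$ into $\cT$, so $G_2$ is a connected spanning subgraph of $\cT$ whose complement on the surface is a single open disk --- that is, a cut-graph of $\cT$.

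It remains to justify the claim. A merge or a split only adds a $2$-sided special face, so the operation that brings in the last triangular face of $\cT$ is a conquest; as it closes $\sin$, it must transfer exactly $\cT\setminus\sin^-$, a triangulated topological disk, which by definition of \texttt{conquer} is the fan of faces incident to the vertex $v$ of some free boundary corner $b$. If this fan had two or more triangles, an intermediate fan vertex $u$ would be incident in $\cT$ only to the two fan triangles meeting at it, forcing $\deg_{\cT}(u)=2$; this is impossible since $\cT$ has neither loops nor multiple edges. Hence $u$ lies on the boundary of $\sin^-$, so the edge $\{v,u\}$ is a chordal edge incident to $b$, contradicting the freeness of $b$. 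Therefore the transferred fan is a single triangle. This single-triangle argument is the one delicate point; the rest is bookkeeping on top of Lemma~\ref{lem:cutGraph}.
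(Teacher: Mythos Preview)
Your proof is correct and follows the same route as the paper's: apply Lemma~\ref{lem:cutGraph} just before the final conquest, and observe that at that moment $\oC=G_2$ and $\Msin$ is (after undoubling the special edges) the map $\cT$. You in fact supply a justification for the claim that the last conquest transfers a single triangle---which the paper simply asserts; as a minor remark, your degree-$2$ subcase never actually arises, since the outer edges $\{w_{i-1},w_i\}$ of the fan must already be boundary edges of $\sin^-$ (the conquest closes $\sin$), so every intermediate $w_i$ lies on the boundary and you can go straight to the chordal-edge contradiction.
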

\begin{proof}
Before the very last conquest, $\oC$ becomes equal to $G_2$; and
$\sin$ is equal to $\cT$ minus the triangular face $f$ on the
other side of the root-face from the base-edge $\{v_0,v_1\}$.
Hence the map $\Msin$ associated with $\sin$ is equal to $\cT$, up
to marking $f$ as a boundary face. According to
Lemma~\ref{lem:cutGraph}, $\oC=G_2$ is a spanning cellular
subgraph of $\Msin=\cT$ and has a unique face (since $M$ has a
unique boundary face), hence $G_2$ is a cut-graph of $\cT$.
\end{proof}

\begin{figure}[th]
\centering \scalebox{1}{
\includegraphics{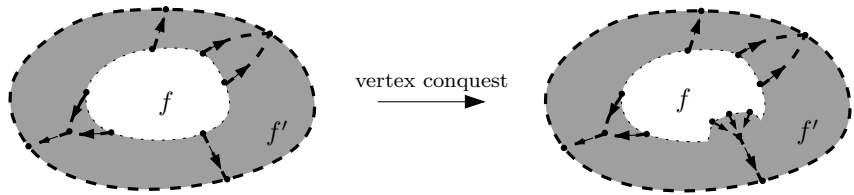}
} \caption{A conquest of a free boundary corner shrinks the
interior of a boundary face $f$ (contour in dotted lines) as well
as the interior of the face $f'$ (contour in dashed lines) of
$\oC$ that contains $f$ (for the sake of clarity, the faces of
$\sin$ are shaded in this figure). The inclusion $f\subset f'$
remains true after the conquest. } \label{fig:ProofLem_conquest}
\end{figure}

\begin{figure}[th]
\centering \scalebox{0.60}{
\includegraphics{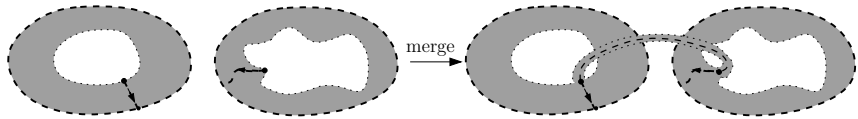}}
\caption{The effect of a merge operation on the growing subcomplex
$\sin$ and on $\oC$ (the faces of $\sin$ are shaded in this
figure). Two faces of $\oC$ are merged and the two corresponding
boundary faces of $\sin$ are merged (the contours of the boundary
faces of $\sin$ are dotted while the contours of the faces of
$\oC$ are dashed).} \label{fig:TwoCycles}
\end{figure}


\subsection{The graphs in color $0$ and $1$ are also cellular}

In this section we show that a $g$-Schnyder wood
computed by the traversal algorithm yields a decomposition of a
triangulation into 3 spanning cellular subgraphs $G_0$, $G_1$, $G_2$,
with $G_2$ having one face ($G_2$ is the cut-graph of the Schnyder wood) and $G_0$
and $G_1$ having each $1+2g$ faces.
This is a natural extension of the property that a planar Schnyder
wood yields a decomposition of a plane triangulation into 3
spanning trees.

\begin{proposition}\label{theo:maps}
Let $\cT$ be a triangulation of genus $g$ endowed with a
$g$-Schnyder wood computed by the algorithm
\textsc{ComputeSchnyderAnyGenus}. The special edges are doubled
(thus $\cT$ gets $2g$
additional degenerated faces of degree 2).

Let $G_0$  be the graph formed by the edges with color $0$
 plus the outer edges
incident to $v_0$. Then $G_0$ is
a spanning cellular subgraph of $\cT$ with $1+2g$ faces (where
some of the  faces might be degenerated, of degree
2). Similarly the graph $G_1$ formed by the edges of color $1$ plus the
two outer edges incident to $v_1$ is a spanning cellular subgraph of $\cT$
with $1+2g$ faces.
\end{proposition}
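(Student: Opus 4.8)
The plan is to mimic the strategy used for the cut-graph condition (Lemma~\ref{lem:cutGraph} and Corollary~\ref{cor:cut}), but now tracking the color-$0$ subgraph as the traversal proceeds, and then to invoke the Euler relation to pin down the face count. Throughout the run of \textsc{ComputeSchnyderAnyGenus}, let $\oC_0$ be the embedded subgraph of $G_0$ consisting of the edges and vertices of $G_0$ that are currently in $\sin$ (special edges not doubled). First I would establish the invariant that at every step strictly before the last conquest, $\oC_0$ is a connected spanning subgraph of $\Msin$, where $\Msin$ is the map associated with $\sin$. Connectivity and spanning-ness hold initially, since $\sin$ is the root-face and $\oC_0$ is the single outer edge incident to $v_0$ inside it — well, more carefully, one checks the base case against the precise initialization. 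The key point for the induction is: each \texttt{colorient} during a conquest of a free boundary corner $b$ gives every newly conquered inner vertex exactly one outgoing edge of color $0$ (the edge to its left neighbor on the boundary), which attaches the new vertices to the already-present $\oC_0$; hence connectivity and spanning-ness are preserved. A split or a merge adds no new vertices, so spanning-ness is trivially preserved, and the doubled special edge contributes no color-$0$ edge, so $\oC_0$ is literally unchanged by split and merge. Thus at the end, $\oC_0$ has grown to all of $G_0$ and $G_0$ is a connected spanning subgraph of $\cT$.

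Next I would count the faces of $G_0$ using the Euler relation, exactly as the number of special edges was forced to be $2g$ in the definition. Working in the map $\cT$ with the $2g$ special edges doubled, we have $|V(\cT)| = n$, and after doubling, $|E| = e_{\cT} + 2g$ where $e_{\cT}$ is the original edge count; by Euler on the triangulation, $e_{\cT} = 3n - 6 + 6g$ — again, this is routine bookkeeping I would not belabor. The subgraph $G_0$ consists of the color-$0$ edges plus the two outer edges incident to $v_0$: by the local condition and the root-face condition, every inner vertex has exactly one outgoing color-$0$ edge, $v_1$ and $v_2$ have none, and $v_0$ has none either (the root-face condition makes all its inner edges of colors $0$, $1$ incoming — in the root-sector incoming of color $0$), so $G_0$ has $n-3$ color-$0$ edges plus $2$ outer edges, i.e. $|E(G_0)| = n-1$; combined with $|V(G_0)| = n$ and connectivity, $G_0$ is in fact a spanning tree if the genus were $0$, but in genus $g$ we instead get $|E(G_0)| = n-1 = |V(G_0)| - 1$, so $G_0$ has exactly $|V|-|E| = 1$ less than a tree would — wait, this needs care, since a connected spanning subgraph with $|V|-1$ edges is a tree and has one face on the sphere but can have more than one face on a higher-genus surface only if it is not cellular. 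So the face count must come from cellularity plus Euler: once I show $G_0$ is cellular, $\chi(G_0) = |V|-|E|+|F| = 2-2g$ gives $|F(G_0)| = 2-2g-(n) +(n-1)+ \text{(correction)}$, i.e. I would solve $n - (n-1) + |F| = 2 - 2g$, yielding $|F(G_0)| = 1 - 2g$, which is absurd for $g>0$ — so something in the edge count is off and I should recount. The correct reading: $G_0$ keeps the $2g$ special edges doubled? No — re-examining the statement, $G_0$ is the color-$0$ edges (special edges are doubled in $\cT$, and a doubled special edge of color $0$ contributes its color-$0$ strand), plus the outer edges incident to $v_0$. The honest computation I would carry out: count color-$0$ edges as (number of inner vertices with an outgoing color-$0$ edge) $= n-3$ if no special edge is color $0$, adjusted by the special-edge contributions, then add the $2$ outer edges, then plug into $\chi = 2-2g$ together with cellularity to read off $|F(G_0)| = 1+2g$.

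So the real content, and the main obstacle, is proving that $G_0$ is \emph{cellular} in $\cT$ — connectivity and spanning-ness are the easy invariants above, but cellularity (that every complementary region is a disk) does not follow from those alone in positive genus. The plan here is again inductive along the traversal, this time tracking not just $\oC_0$ but the faces of $\Msin$ relative to the faces of $\oC_0$: I would prove, in parallel with the connectivity invariant, that $\oC_0$ is a cellular spanning subgraph of $\Msin$ and that there is a natural correspondence between the boundary faces of $\Msin$ and certain faces of $\oC_0$, analogous to Lemma~\ref{lem:cutGraph}. The subtle point, and where this differs from the $G_2$ case, is the effect of a \emph{split}: a split edge for $\sin$ has both brins in the \emph{same} boundary face of $\Msin$, and splitting it divides that boundary face in two; I must verify that the ambient face of $\oC_0$ is also genuinely divided (so the genus of $\oC_0$ does not drop), which uses that the split edge is non-separating together with the invariant description of edge colors around a boundary face. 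Symmetrically for a \emph{merge}, two boundary faces of $\Msin$ get merged with a handle added; I must check $\oC_0$'s two surrounding faces also merge and its genus goes up by one, keeping it cellular. Once the invariant "$\oC_0$ is cellular in $\Msin$, with the same genus" is maintained through \texttt{conquer}, \texttt{split}, and \texttt{merge}, evaluating it at the last conquest (where $\Msin = \cT$ and $\oC_0 = G_0$) gives that $G_0$ is cellular in $\cT$; then Euler finishes the face count as above. The argument for $G_1$ is identical with colors $0$ and $1$ and vertices $v_0, v_1$ interchanged in the obvious way, using that each inner vertex also has exactly one outgoing color-$1$ edge in each sector.
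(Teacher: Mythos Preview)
Your approach has real gaps, and it diverges from the paper's.

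First, the edge count. You assert that each inner vertex has exactly one outgoing edge of color $0$, but this holds only for vertices not incident to any special edge: by the local condition, a vertex with $k$ incident special edges has $k+1$ sectors and hence $k+1$ outgoing edges of color $0$ (and the non-root sectors at $v_0,v_1$ contribute similarly). Summing gives $|E(G_0)|=(n-3)+4g+2=n-1+4g$, and then Euler on a genus-$g$ cellular graph yields $|F(G_0)|=1+2g$. Your count of $n-1$ misses the $4g$ term and is what produced the absurd $1-2g$.

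Second, and more seriously, the cellularity invariant you propose cannot be maintained. Lemma~\ref{lem:cutGraph} works for $G_2$ precisely because every special edge belongs to $G_2$: at a split or merge the special edge is added to $\oC$, so the face of $\oC$ containing the affected boundary face splits or merges in lockstep with the boundary face of $\Msin$. For $G_0$ this fails. At the moment of a split or merge the special edge enters $\sin$ with no color (colors are assigned only by later \texttt{colorient} steps), so---as you yourself say---$\oC_0$ is literally unchanged; but then its face cannot be ``genuinely divided'' as you demand a few lines later. The face correspondence breaks right there. Your connectivity/spanning invariant is also off: a conquest gives the \emph{corner's} vertex its outgoing color-$0$ edge, while the newly added vertices receive only incoming color-$2$ edges at that step, so $\oC_0$ is not spanning in $\sin$ during the run.

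The paper sidesteps all of this by passing to the dual. One shows that the complementary dual $D_0\subset\cT^*$ of $G_0$ is acyclic, which (since $G_0$ is spanning) forces every face of $G_0$ to be a disk. The invariant is that the already-determined part $D_0'\subset D_0$ (dual edges whose primal edge has both incident faces in $\sin$ and is not of color $0$) stays a forest: a split or merge adds nothing to $D_0'$, because the two triangular faces flanking the new special edge are not yet in $\sin$; a conquest attaches a single chain to $D_0'$. At the end $D_0'=D_0$ is acyclic, so $G_0$ is cellular; spanning comes straight from the local conditions, and the face count follows from the corrected edge count via Euler.
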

\begin{figure}
\begin{center}
 \scalebox{0.98}{
 \includegraphics{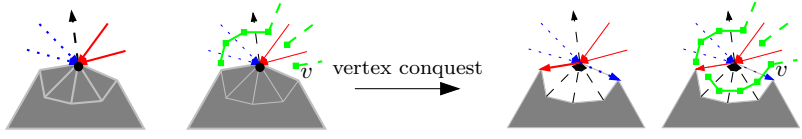}
 }
\end{center}
\caption{The effect of a conquest on the complementary dual $D_0$
of $G_0$ is to attach a chain at a vertex $v$, hence $D_0$ remains acyclic.} \label{fig:dual_conq}
\end{figure}
\begin{proof}
By the local conditions of $g$-Schnyder woods, $G_0$ spans all
inner vertices  (each such vertex is
incident to at least one edge of color $0$). Since one adds the
two edges $\{v_0,v_2\}$ and $\{v_0,v_1\}$, $G_0$ also
spans the vertices of the root-face, so $G_0$ is a spanning
subgraph of $\cT$. Let $\cT^*$ be the dual map of $\cT$. To show
that $G_0$ is cellular, it is enough to show that the
complementary dual $D_0$ of $G_0$ is acyclic ($D_0$ is the
subgraph of $\cT^*$ induced by all vertices of $\cT^*$ and by the
edges of $\cT^*$ that are dual to the edges of $\cT\setminus
G_0$).
At each step of the traversal algorithm, let $D_0'$ be the
subgraph of $D_0$ induced by the edges of $D_0$ dual to edges
having a face in $\sin$ on both sides.
Let us show that $D_0'$ remains acyclic (i.e., a forest) all along
the traversal algorithm. The effect of a merge or split is to add to $\sin$
a special edge $e$, precisely, the two edges representing
$e$ and the 2-sided enclosed face. Since
the two triangular faces incident to each side of $e$ are not in
$\sin$,
 a merge or a split does not add any edge to $D_0'$, so $D_0'$
remains acyclic. Now consider a conquest of a free boundary corner $b$.
Before the conquest, let $e$ and $e'$ be the edges delimiting $b$ in cw order,
let $f$ be the face encountered just before $e$ in cw order around the origin of $b$,
and let $v$ be the vertex of $D_0'$ corresponding to $f$.
Then, as shown in Figure~\ref{fig:dual_conq}, the effect of the conquest
on $D_0'$ is to attach a chain at $v$.
Hence $D_0'$
remains acyclic. At the end, $D_0'$ is equal to $D_0$, hence
$D_0$ is acyclic, so $G_0$ is cellular. Finally, $G_0$ has $n$
vertices ($G_0$ spans all vertices of $\cT$) and has $n+4g-1$
edges according to the local conditions. Since $G_0$ has genus
$g$, the Euler relation ensures that $G_0$ has $1+2g$ faces. The
proof for $G_1$ relies on the same arguments.
\end{proof}

\emph{Remark~5.} The properties of $G_2$ (cut-graph condition),
and of  $G_0$, $G_1$ (stated in Proposition~\ref{theo:maps}) can
be considered as extensions of the fundamental property of planar
Schnyder woods~\cite{Schnyder89,Schnyder90}: in the planar case,
for each color $i\in\{0,1,2\}$, the graph formed by the edges in
color $i$ plus the two outer edges incident to $v_i$ is a spanning tree.
Figure~\ref{fig:g_spanning_trees} shows an example in genus~$1$.

\begin{figure}[th]
\centering \scalebox{0.85}{
\includegraphics{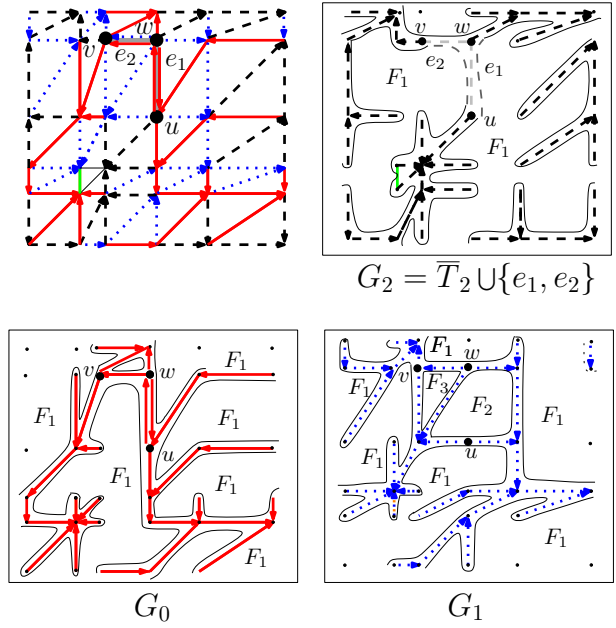}}
\caption{A triangulated torus endowed with a Schnyder
wood.
The dashed edges (color $2$) form a tree $T_2$, and the addition of the two
special edges and the two outer edges incident to $v_2$
yields a cut-graph $G_2$.
The solid edges (color $0$) plus the two outer edges incident to
$v_0$ form a spanning cellular subgraph $G_{0}$ with $3$ faces
(one face having degree $2$).
Similarly, the solid edges (color $1$) plus the two outer edges
incident to $v_1$ form a spanning cellular subgraph $G_{1}$ with
$3$ faces.} \label{fig:g_spanning_trees}
\end{figure}


\section{Application to encoding}
\label{sec:encoding_application}

In the planar case, Schnyder woods yield a simple encoding
procedure for triangulations, as described in~\cite{He99} and more
recently in~\cite{BB07a}. Precisely, a planar Schnyder wood with $n$
 vertices is encoded by two parenthesis words $W,W'$ of respective lengths
 $2n-2$ and $2n-6$. Let $\ol{T_2}$ be the tree $T_2$ plus the two outer edges
incident to $v_2$. Call $\theta$ the corner incident to $v_2$ in the outer face.
The first word $W$ is the parenthesis word (also called Dyck word)
that encodes the tree
$\ol{T_2}$, that is, $W$ is obtained from a cw walk (i.e., the walker has
the infinite face on its right) around $\ol{T_2}$ starting at $\theta$,
writing an opening parenthesis at the first
traversal of
an edge of $\ol{T_2}$ (away from the root) and a closing parenthesis
at the second traversal (toward the root).
The second word $W'$ is obtained from the same walk around $T_2$,
but $W'$ encodes the edges
that are not in $\ol{T_2}$, i.e., the edges of color $0$ and $1$.
Precisely, during the traversal, write an opening parenthesis in $W'$ each time an outgoing edge in color $0$ is crossed  and write a closing parenthesis
in $W'$ each time an ingoing edge of color $1$ is crossed.

For a triangulation with $n$ vertices, $W$ has length $2n-2$, and $W'$
has length $2n-6$. Hence the coding word
has total length $4n-8$. This code is both simple and quite compact, as
the length $4n-8$ is not far from the information-theory lower bound
of $\log_2 \left(4^4/3^3 \right) \approx
3.245$ bits per vertex, which is attained in the planar case by a
bijective construction due to Poulalhon and
Schaeffer~\cite{Pou03}.

In the higher genus case there does not exist an exact enumeration
formula, nevertheless an asymptotic estimate~\cite{Gao93} of the
number of genus $g$ rooted triangulations with $n$ vertices leads
to the information theory lower bound of $3.245n+\Omega(g\log n)$, i.e.,
the exponential growth rate is the same in every genus.
For the higher genus case we do not yet know any linear time encoding
algorithm matching asymptotically the information theory bound, and a bijective
construction based on a special spanning tree is still to be
found. Nevertheless we can here extend to higher genus the simple
encoding procedure of~\cite{He99,BB07a} based on Schnyder woods.

\paragraph{Encoding in higher genus} To encode the Schnyder wood we proceed in a similar way as in
the planar case except that we have to deal with the special
edges. Let $\cT$ be a genus $g$ triangulation with $n$ vertices
endowed with a Schnyder wood computed by our traversal algorithm;
precisely, we use the implementation described in
Lemma~\ref{lem:time_exec}. Let $\ol{T_2}$ be the spanning tree of
$\cT$ consisting of the edges in color $2$ plus the two edges
$\{v_0,v_2\}$ and $\{v_1,v_2\}$. Let $G_2$ be the cut-graph of the
Schnyder wood, i.e., $G_2$ is $\ol{T_2}$ plus the $2g$ special
edges. We classically encode $G_2$ as the Dyck word $W$ for
$\ol{T_2}$, augmented by $2g$ memory blocks, each of size
$O(\log(n))$ bits, so as to locate the two extremities of each
special edge. In each memory block we also store the colors and
directions of the two sides of the special edge. Hence $G_2$ is
encoded by a word $W$ of length $2n-2+O(g\log(n))$. The encoding
of the Schnyder wood is completed by a second binary word
 $W'$ that is obtained from a clockwise walk
along the (unique) face of $G_2$ (cw means that the face is on the right of the walker)  starting at the corner $\theta$ incident to $v_2$
in the root-face. Along this walk, we write a $0$ when crossing
a non-special outgoing
edge of color $0$ and we write a $1$ when crossing a non-special
ingoing edge of color $1$.
Since there are $2n-6+4g$ non-special edges of color $0$ or $1$, the
word $W'$ has length $2n-6+4g$. Therefore the pair of words $(W,W')$ is of
total length $4n+O(g\log(n))$. In addition these words can be obtained
in time $O((n+g)g)$ from a Schnyder wood on $\cT$ (as we have seen in
Lemma~\ref{lem:time_exec},
the Schnyder wood itself can be computed in time $O((n+g)g)$.

Now we are going to show that the pair $(W,W')$ actually encodes the Schnyder
wood (and in particular the triangulation) and that the Schnyder wood can
be reconstructed from $(W,W')$ in time $O((n+g)g)$. The proof relies
on two lemmas.

\begin{lemma}\label{lem:color0}
Let $\cT$ be a triangulation endowed with a $g$-Schnyder wood.
Then
the Schnyder wood can be recovered after the deletion process
that consists in removing all
the non-special edges of color 0. In other words, the information given by non-special edges of color 0 is
redundant.
\end{lemma}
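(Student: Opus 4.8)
The plan is to mimic the planar argument: show that if one knows the triangulation minus all non-special edges of color $0$, together with the colors/orientations of the remaining edges and the data of the special edges, then each deleted edge of color $0$ can be recovered deterministically by a local reinsertion rule, processed in a well-chosen order. First I would recall, from the local conditions (Definition \ref{def:g_Schnyder wood} and Remark~1), the cyclic pattern of edges around an inner vertex $v$ inside each sector: $\Seq(\dar 1),\uar 0,\Seq(\dar 2),\uar 1,\Seq(\dar 0)$. The key observation is that the unique outgoing edge of color $0$ at $v$ sits immediately after the block of incoming edges of color $1$ and immediately before the block of incoming edges of color $2$, inside the sector determined by the outgoing edge of color $2$ and the special faces. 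Hence, once the color-$0$ outgoing edges are deleted, what remains at $v$ in that sector is a gap between the last incoming-$1$ edge (or the sector boundary) and the first incoming-$2$ edge: the missing edge of color $0$ has its head at a specific, identifiable corner of $v$.

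The main step is then to argue that the \emph{tail} of each missing edge is also determined. I would use the tree structure of $T_2$ (Corollary \ref{cor:cut}): process vertices in reverse order of the traversal that computed the Schnyder wood, equivalently from the leaves of $T_2$ toward $v_2$, or, more intrinsically, I would mirror the planar proof of \cite{He99,BB07a} and process the vertices in the order given by $T_0$-and-$T_1$ structure — the point being that after deleting the out-$0$ edge at $v$, the faces that were bounded by it collapse in a controlled way, and the endpoint of the reinserted edge is forced by the requirement that the other two corners of each incident triangular face be correctly colored. Concretely, when we scan the boundary of the region not yet reconstructed in the $G_2$-face walk (the same walk used to build $W'$), each time we are at a corner of a vertex $v$ that, by the remaining edges, must carry an outgoing color-$0$ edge, the face on the appropriate side tells us the target vertex; the special faces and special edges are handled as in Proposition \ref{theo:maps}, since a merge/split does not add edges to $D_0'$ and so never obstructs the reinsertion.

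I expect the main obstacle to be bookkeeping at the vertices incident to special edges: there a vertex has $k+1$ sectors, hence up to $k+1$ outgoing edges of color $0$ (one per filled sector, as the invariants in Section~4.5 show), and one must check that the sector boundaries — which are exactly the special faces, recorded in the memory blocks of $W$ — let us disambiguate which deleted edge belongs to which sector, and that the reinsertion within one sector does not interfere with another. The cleanest way to finish is: (i) state that deletion of the non-special color-$0$ edges turns $\cT$ into a planar-like structure sector by sector; (ii) give the reinsertion rule locally inside each sector, identical to the planar rule; (iii) invoke the acyclicity of $D_0'$ from the proof of Proposition \ref{theo:maps} to guarantee the reinsertion order is well-founded and the result unique. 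I would keep the computation routine and merely assert that carrying it out recovers exactly the original Schnyder wood, so the color-$0$ non-special edges are redundant.
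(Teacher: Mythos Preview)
Your plan is overcomplicated and misses the key idea that makes the paper's proof work. The paper does not track the \emph{outgoing} color-$0$ brin at each vertex and then hunt for its other end via a global processing order; instead it looks at the \emph{incoming} side. For each vertex $v$, take a maximal run $I$ of non-special edges of color $0$ going into $v$. In cw order around $v$ this run is bounded by an edge $e$ (either outgoing of color $2$ or a special edge ingoing of color $0$) and an edge $e'$ (outgoing of color $1$). The local conditions then force every edge of the path $P$ of neighbours of $v$ along $I$ to be either color $1$ (forward along $P$) or color $2$ (backward), so $e$, $e'$ and $P$ all survive the deletion and bound a face $f$ of the resulting map. The decisive observation is that $v$ is the \emph{unique} corner of $f$ whose right-edge (looking into $f$) is outgoing of color $1$; hence each non-triangular face of the deleted map can be re-triangulated as a fan from that uniquely determined apex, with no processing order and no global invariant required.

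Your plan never supplies such a criterion. The sentence ``the face on the appropriate side tells us the target vertex'' is precisely the step that needs proof: that face is the whole fan-hole, with many vertices on its boundary, and you must say which one is the head. Falling back on a traversal order or on the acyclicity of $D_0'$ from Proposition~\ref{theo:maps} does not close this gap, and moreover both of those ingredients are only established for Schnyder woods produced by \textsc{ComputeSchnyderAnyGenus}, whereas the lemma is stated for an arbitrary $g$-Schnyder wood. (There is also a head/tail slip in your first paragraph: the outgoing color-$0$ edge at $v$ has its \emph{tail} at $v$, so what you first identify is the tail corner; the hard part is the head.) The fix is to switch viewpoint to the incoming fans and use the local corner criterion above.
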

\begin{proof}
To have a unified treatment (no special case for the vertex $v_0$)
it proves convenient here to direct the edges $\{v_0,v_2\}$ and
$\{v_0,v_1\}$ out of $v_0$ and to give color $2$ to $\{v_0,v_2\}$ and color
$1$ to $\{v_0,v_1\}$.
Consider a maximal non-empty interval $I$ of non-special edges of color $0$
going into a vertex $v$ of $\cT$. Let $e$ and $e'$ be the edges that
respectively precede and follow $I$ in cw order around $v$.
By the local conditions of Schnyder woods (Figure~\ref{fig:def_grealizer}(b)),
$e'$ is outgoing of color $1$; and either $e$ belongs to a special edge and is
ingoing of color $0$,
or $e$ is outgoing of color $2$.
Let $P=v_0,v_1,\ldots,v_k,v_{k+1}$ be the path of $\cT$ formed by the neighbors of $v$
in cw order between $e$ and $e'$, that is,
 $v_0$ is the other end of $e$, $v_{k+1}$ is the other end of $e'$,
and the $v_i$'s for $1\leq i \leq k$ are the other ends of the edges of $I$ taken in cw order around $v$. Then, by the local conditions of Schnyder woods,
each edge
 $\{v_i,v_{i+1}\}$, for $0\leq i \leq k$,  either is of color $1$ directed from $v_i$ to $v_{i+1}$ or is of color $2$ directed from $v_{i+1}$ to $v_i$. Hence, the edges of $P$ and the edges $e$ and $e'$
are not removed by the deletion process. Call $M$ the map created from $\cT$
by the deletion process.
Then there is a face $f$ in $M$ delimited by $P$, $e$ and $e'$: this is the face of $M$  formed by the removal of the edges in $I$.
  In addition the corner formed by $e$
and $e'$ is the unique corner of $f$ whose right-edge (looking toward the interior of $f$)
is outgoing of color $1$. Thus the edges removed inside $f$ (and more generally all the
removed edges)
 can be recovered:
one looks for the unique corner of $f$ whose right-edge is outgoing of color $1$,
and then one inserts an interval of ingoing edges of color $0$ at the corner so
as to triangulate $f$.
\end{proof}


\begin{figure}[th]
\centering \scalebox{0.90}{
\includegraphics{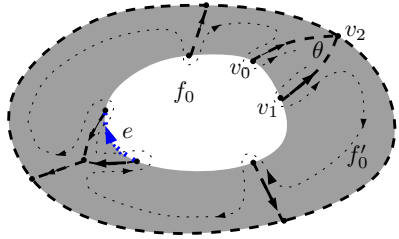}}
\caption{An edge $e$ colored $1$ (dotted arc) has the boundary
face $f_0$ on its right just before the conquest coloring $e$.
Hence, just before the conquest, a cw walk around $f_0'$ (dashed
lines) encounters the outgoing brin of $e$ first.}
\label{fig:edge1}
\end{figure}

\begin{lemma}\label{lem:red_encode}
Consider a $g$-Schnyder wood $S$ calculated
by the traversal algorithm under the implementation
described in Lemma~\ref{lem:time_exec}.
Denote by $G_2$ the cut-graph of $S$ and by $\theta$ the
 corner incident to $v_2$ in the root-face ($\theta$ is also a corner of $G_2$).
 Let $e$ be a non-special edge of color $1$ of $S$.

 Then, during a cw walk along $G_2$ (i.e., with the unique face of $G_2$ on the
 right of the walker) starting at $\theta$, the outgoing brin of $e$  is crossed
 before the ingoing brin of $e$.
\end{lemma}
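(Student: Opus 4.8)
The statement I want to prove is Lemma~\ref{lem:red_encode}: along a clockwise walk around the unique face of the cut-graph $G_2$ starting at the corner $\theta$ incident to $v_2$ in the root-face, the outgoing brin of a non-special color-1 edge $e$ is crossed before its ingoing brin. Let me think about what structure we have. $G_2$ consists of $T_2$ (the color-2 tree), the two outer edges at $v_2$, and the $2g$ special edges; it is a cut-graph, so cutting $\cT$ along $G_2$ produces a topological disk, and the cw walk around the unique face of $G_2$ is exactly the boundary walk of that disk. The walk visits each brin of each edge of $G_2$ exactly once (each edge of $G_2$ is visited twice, once from each side, except that for the special edges, doubled, the combinatorics need care). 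The edge $e$ is a non-special color-1 edge, so $e\notin G_2$; hence $e$ lies strictly inside the disk obtained by cutting along $G_2$, and both its brins are crossed by the walk, at two distinct times.

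**Key idea: reduce to the moment of conquest.** I would exploit the traversal algorithm, as suggested by the hint figure (Figure~\ref{fig:edge1}). The edge $e$ receives color 1 at the conquest of some free boundary corner $b$, say of vertex $v$ — by the \texttt{colorient} rule, $e$ is directed out of $v$, and at that moment $e$ has the boundary face $f_0$ of $\sin$ (the one containing $\{v_0,v_1\}$, by the implementation of Lemma~\ref{lem:time_exec}) on its right when looking toward $v$; equivalently $e$ becomes an edge of $G_2$'s complement exposed to the boundary face. By Lemma~\ref{lem:cutGraph}, that boundary face $f_0$ of $\Msin$ sits inside the corresponding face $f_0'$ of $\oC$ (the part of $G_2$ already built). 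The crucial observation is: as the traversal proceeds, $\oC$ only grows toward $G_2$ and the boundary face $f_0$ shrinks, but the cw walk around $f_0'$ — restricted to the brins of $G_2$ already present — is a stable sub-walk of the final cw walk around the unique face of $G_2$. I need to show that, at the conquest of $b$, the outgoing brin of $e$ is encountered by the cw walk around $f_0'$ \emph{before} its ingoing brin, and that this relative order is preserved to the end.

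**Carrying it out.** First I would establish: at the conquest of $b$ coloring $e$, the outgoing brin of $e$ lies on the contour of $f_0'$ (since $e$ is adjacent to $f_0$ on the appropriate side and $f_0\subset f_0'$), whereas the ingoing brin of $e$ lies on the contour of some face of $\oC$ that is \emph{not yet separated out} — it will be exposed only at a strictly later conquest of the other endpoint $w$ of $e$. Concretely, the ingoing brin of $e$ is crossed by the walk only when the triangular face on the other side of $e$ gets absorbed into $\sin$, which happens strictly after the conquest coloring $e$. Then I would argue that the cw boundary walk around the unique face of $G_2$, read starting from $\theta$, when restricted to the brins present in $\oC$ at any intermediate step, equals the cw boundary walk around $f_0'$ at that step (plus the walks around the other faces of $\oC$, which are irrelevant since $e$ touches $f_0$). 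So the outgoing brin of $e$ — which appears in the $f_0'$-walk at the conquest of $b$ — is listed before the ingoing brin of $e$, which appears in a later walk-segment. Since conquer/split/merge operations refine the boundary-face structure but never reorder already-present brins along $f_0'$ (a conquest attaches a chain, a split/merge only subdivides or joins faces without permuting the surviving contour brins, by the analysis preceding Lemma~\ref{lem:cutGraph}), this relative order is inherited by the final walk around $G_2$.

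**Main obstacle.** The delicate point is the bookkeeping around special edges and the precise claim that the cw walk around $f_0'$ is an order-preserving restriction of the final walk around $G_2$ — i.e., that merge/split operations, which alter the genus and the number of boundary faces, do not reverse or reshuffle the cyclic order of brins of $G_2$ already laid down along the contour of $f_0$. I would handle this by tracking, through each operation, the ``complete list of brins'' of the boundary face $f_0$ (in the sense of Section~\ref{sec:subcomplexes}) and noting that \texttt{colorient} writes the outgoing brin of $e$ into this list at a position that is never crossed again until $e$'s ingoing brin is written much later; the correspondence of Lemma~\ref{lem:boundary} and the face-inclusion bijection of Lemma~\ref{lem:cutGraph} then transfer this to $G_2$. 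I expect the genuine difficulty — as opposed to routine checking — to be this stability-of-order argument across merges, since a merge topologically glues two boundary faces and one must verify the glued walk still reads the two brins of $e$ in the same order.
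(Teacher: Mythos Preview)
Your overall plan matches the paper's: examine the moment of the conquest that colors $e$, use the inclusion $f_0\subset f_0'$ from Lemma~\ref{lem:cutGraph}, observe that the cw walk around $f_0'$ starting at $\theta$ meets the outgoing brin of $e$ before the ingoing one, and then argue that this relative order survives all later operations. The paper's proof is exactly this, stated in three sentences.

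There is, however, a real confusion in your ``Carrying it out'' paragraph. You write that at the conquest of $b$ the outgoing brin of $e$ lies on the contour of $f_0'$ while the ingoing brin ``lies on the contour of some face of $\oC$ that is not yet separated out'' and ``will be exposed only at a strictly later conquest of the other endpoint $w$''. This is not correct: at the moment just before the conquest, $e$ is a boundary edge of $\sin$ with both endpoints $v$ and $v_l$ already in $\oC$ (since $\oC$ spans $\Msin$), and since $e\notin\oC$ it sits as a chord inside the single face $f_0'$. Both brins are therefore already present in the cw walk around $f_0'$; the question is purely one of \emph{order}, not of \emph{time of exposure}. The actual reason the outgoing brin comes first is the geometric observation the paper records via Figure~\ref{fig:edge1}: by the \texttt{colorient} rule, the boundary face $f_0$ lies on the right of $e$ (looking from the outgoing brin at $v$ toward the ingoing brin at $v_l$); since $\theta$ sits in the root-face, which meets $f_0$ across $\{v_0,v_1\}$ (an edge not in $\oC$), the corner $\theta$ is on the $f_0$-side of the chord $e$ inside $f_0'$, and hence the cw walk around $f_0'$ starting at $\theta$ reaches the brin at $v$ before the brin at $v_l$.

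As for what you call the ``main obstacle'' --- stability of the order under subsequent conquests, splits, and merges --- the paper dispatches this in half a sentence (``this property will continue to hold for $e$ until the end of the traversal''), since each operation only refines or glues the contour of $f_0'$ without permuting the relative order of brins already on it. Your concern there is legitimate but routine; the substantive step is the one you have mis-argued.
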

\begin{proof}
At each step of \textsc{ComputeSchnyderAnyGenus} strictly before termination,
let $f_0$ be the boundary face of $\sin$ containing $\{v_0,v_1\}$ and let
$f_0'$ be the corresponding face of $\oC$ (we use the notation of
Lemma~\ref{lem:cutGraph},
$\oC$ consists of the edges and vertices of $G_2$ that are in $\sin$), that is,
$f_0'$ is the face of $\oC$ containing $f_0$.
An edge $e$ of color $1$ has $f_0$ on its right just before the
conquest coloring $e$ (by definition of the colorient rule).
Hence, as shown in Figure~\ref{fig:edge1}, $e$ is encountered first at its outgoing brin during a cw walk
around $f_0'$ starting at $\theta$; and this property will continue to hold for $e$
until the end of the traversal.

\end{proof}

We can now describe how to reconstruct the Schnyder wood from
the two words $(W,W')$. First,  construct the cut-graph $G_2$ using $W$.
Note that the directions of  edges and colors of the two sides of each special
edge of $G_2$ are known from $W$. Hence, by the local conditions of Schnyder woods, we can
already insert the outgoing brins of color $0$  or $1$ that are non-special
(a non-special brin is a brin of a non-special edge).
The non-special
outgoing brins of color $0$ are ordered as $b_1,b_2,\ldots,b_k$ according
to the order in which they are crossed during a cw walk along $G_2$
(i.e., with the unique face of $G_2$ on the right of the walker).
Next, the word $W'$ indicates where to insert
the non-special ingoing brins of color $1$. Precisely, factor $W'$ as
$$
W'=1^{\!r_1}01^{\!r_2}01^{\!r_3}\ldots 01^{\!r_{k+1}},
$$
where the integers $r_i$'s are allowed to be zero. Then, for each $i\in[1..k]$,
insert $r_i$ ingoing brins of color $1$ in the corner $(b_i,\mathrm{follower}(b_i))$
(where the follower of a brin $b$ is the next brin after $b$ in cw order around its origin).
And insert $r_{k+1}$ ingoing brins of color $1$
in the corner incident to $v_1$ delimited to the right by $\{v_1,v_0\}$.

Afterwards, we use
Lemma~\ref{lem:red_encode} to form the non-special edges of color $1$.
Write a parenthesis word $\pi$
obtained from a cw walk along $G_2$ starting at $\theta$,
writing an opening parenthesis each
time a non-special outgoing brin of color $1$ is crossed and writing a closing
parenthesis each
time a nonspecial ingoing brin of color $1$ is crossed.
Then, Lemma~\ref{lem:red_encode} ensures that the matchings of $\pi$
correspond to the non-special edges of color $1$ in the Schnyder wood, so
we just have to form the non-special
edges of color $1$ according to the matchings of $\pi$.

Finally, since the edges of color $0$ are redundant (by Lemma~\ref{lem:color0}),
there is no ambiguity to insert the edges of color $0$ at the end (i.e.,
complete the already inserted outgoing half-edges of color $0$ into
edges).

To conclude, the non-special edges of color $0$ are redundant, the
cut-graph can be encoded by a parenthesis word $W$ of length $2n-2$
(for the tree $\ol{T_2}$) plus $O(g\log(n))$ bits of
memory for the special edges, and the edges of color $1$ can be
inserted from a word $W'$  of length $2n-6+4g$.
Clearly the reconstruction of the Schnyder wood from $(W,W')$
takes time $O((n+g)g)$, since it just consists
in building the cut-graph $G_2$ and walking cw along $G_2$.
All in all, we obtain the following
result:

\begin{proposition}
A triangulation of genus $g$ with $n$ vertices can be encoded---via a $g$-Schnyder wood---by a binary word of length
$4n+O(g\log(n))$. Coding and decoding can be done in time $O((n+g)g)$.
\end{proposition}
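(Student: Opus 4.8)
The plan is to assemble the encoding and decoding procedures sketched above, using the two structural facts Lemma~\ref{lem:color0} and Lemma~\ref{lem:red_encode} to certify that decoding is unambiguous. First I would run \textsc{ComputeSchnyderAnyGenus}($\cT$) in the implementation of Lemma~\ref{lem:time_exec} to obtain, in time $O((n+g)g)$, a $g$-Schnyder wood whose cut-graph is $G_2$; write $\ol{T_2}$ for the tree $T_2$ plus the two outer edges at $v_2$, and $\theta$ for the corner of $G_2$ at $v_2$ in the root-face.

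For the encoding I emit two words. The word $W$ is the Dyck word of the spanning tree $\ol{T_2}$, read off a cw walk around $\ol{T_2}$ from $\theta$ (opening parenthesis at the first, closing parenthesis at the second traversal of each edge), of length $2n-2$; to it I append $2g$ memory blocks of $O(\log n)$ bits each, one per special edge, recording the two positions of its endpoints along the walk together with the colors and directions of its two sides. This determines $G_2$ completely in $2n-2+O(g\log n)$ bits. The word $W'$ comes from a cw walk along the unique face of $G_2$ starting at $\theta$: write $0$ at each crossing of a non-special outgoing edge of color $0$ and $1$ at each crossing of a non-special ingoing edge of color $1$. By Proposition~\ref{theo:maps} and the local conditions (equivalently, the Euler relation) there are exactly $2n-6+4g$ non-special edges of colors $0$ or $1$, so $|W'|=2n-6+4g$ and the total length is $4n+O(g\log n)$ as claimed.

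For the decoding I first rebuild $G_2$ from $W$: reconstruct $\ol{T_2}$ from its Dyck word, then splice in the $2g$ special edges at the positions stored in the memory blocks, with their recorded colors and directions. Since those colors and directions are known, the local conditions of a $g$-Schnyder wood force the cyclic position of every non-special outgoing brin of color $0$ or $1$, so I insert all of them. A cw walk along the unique face of $G_2$ from $\theta$ lists the non-special outgoing color-$0$ brins in an order $b_1,\dots,b_k$; factoring $W'=1^{r_1}01^{r_2}0\cdots 01^{r_{k+1}}$ tells me to place $r_i$ ingoing color-$1$ brins in the corner immediately after $b_i$ for $i\le k$, and $r_{k+1}$ of them in the corner at $v_1$ bounded on the right by $\{v_1,v_0\}$. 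Now Lemma~\ref{lem:red_encode} guarantees that the parenthesis word $\pi$ formed along the same cw walk --- opening at a non-special outgoing color-$1$ brin, closing at a non-special ingoing color-$1$ brin --- is well formed and that its matchings are exactly the non-special color-$1$ edges, which I then create. Finally Lemma~\ref{lem:color0} says the non-special color-$0$ edges are redundant, so I complete the already-placed outgoing color-$0$ brins into edges --- triangulating each resulting non-triangular face at its unique corner whose right edge is outgoing of color $1$ --- and recover $\cT$ together with its Schnyder wood uniquely.

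For the running time, coding is the computation of the Schnyder wood ($O((n+g)g)$ by Lemma~\ref{lem:time_exec}) plus a constant number of $O(n+g)$ walks; decoding is building $G_2$ (the $2g$ special-edge splices cost $O(n+g)$ each, hence $O((n+g)g)$) plus a constant number of $O(n+g)$ walks around its face, so both are $O((n+g)g)$. The only non-routine ingredients are the two lemmas already established: Lemma~\ref{lem:color0}, which makes the color-$0$ edges free to omit, and Lemma~\ref{lem:red_encode}, which is what makes $\pi$ a legitimate balanced parenthesis word and is therefore the real crux; everything else is bookkeeping with the half-edge data structure and counting edges by color via the Euler relation.
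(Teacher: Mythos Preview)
Your proposal is correct and follows essentially the same route as the paper: the same pair $(W,W')$, the same use of Lemma~\ref{lem:color0} to drop the non-special color-$0$ edges and of Lemma~\ref{lem:red_encode} to match the color-$1$ brins via a balanced parenthesis word, and the same bookkeeping for the length and time bounds. The only cosmetic difference is that you justify $|W'|=2n-6+4g$ via Proposition~\ref{theo:maps}/Euler, whereas the paper simply asserts the count.
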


We mention that one could also design a more sophisticated code
that supports queries, as done in~\cite{Chu98,BarbayISAAC07}. The
arguments would be similar to the ones given
in~\cite{BarbayISAAC07}, which treats plane (labeled)
triangulations.
To wit, given a genus $g$ (unlabeled) triangulation $\mathcal{T}$
with $f$ faces and $e$ edges, one could obtain a compact
representation of $\mathcal{T}$ using asymptotically $(2\log
6)e+O(g\log e)$ bits, or equivalently $7.755\ \!f+O(g\log f)$
bits, which answers queries for vertex adjacency and vertex degree
in $O(1)$ time.
The main idea would be to compute a g-Schnyder wood of
$\mathcal{T}$ and to encode the corresponding maps
$G_i$, $i\in\{0,1,2\}$.
In order to efficiently support adjacency queries on vertices, we
would have to encode the three maps $G_0$, $G_1$, $G_2$ using
 a multiple parenthesis system (3
types of parentheses).

In~\cite{CastelliWADS05} is described another partitioning strategy (not based
on Schnyder woods nor canonical orderings) answering queries, which
 achieves a better compression rate of $2.175f+O(g\log f)$ bits when dealing with genus $g$
 triangulations having $f$ triangles
(using a different face-based navigation).
Nevertheless, we believe that, compared to~\cite{CastelliWADS05},
an approach based on Schnyder woods would make it possible to deal
in higher genus with more general graphs~(\cite{Chu98}) and
\emph{labeled} graphs~(as done in~\cite{BarbayISAAC07} in the
planar case).


\section{Conclusion and perspectives}
\label{sec:conclusion}

We have extended to arbitrary genus the definition of Schnyder woods,
a traversal procedure for computing such a Schnyder wood in linear  time (for fixed
genus) and an encoding algorithm providing an asymptotic compression rate of $4$ bits
per vertex (again for fixed genus).
Some further problems and related topics are listed next.

\subsubsection*{Applications of Schnyder Woods as canonical orderings}

We point out that our graph traversal procedure induces an
ordering for treating the vertices so as to shell the surface
progressively. Such an ordering is already well known in the
planar case under the name of \emph{canonical ordering} and has
numerous applications for graph encoding and graph
drawing~\cite{Chu98,Kant96}. It is thus of interest to extend this
concept to higher genus.
%
%
The only difference is that in the genus $g$ case there is a small
number ---at most $2\cdot 2g$--- of vertices that might appear
several times in the ordering; these correspond to the vertices
incident to the $2g$ special edges (split/merge edges) obtained
during the traversal.
There are several open questions we think should be investigated
concerning the combinatorial properties of such orderings and the
corresponding edge orientations and colorations.
A related question in our context is to ask if
 any  Schnyder wood can be obtained as a result of our traversal procedure
 (if not, which property the Schnyder wood has to satisfy).
Another line of research is to see whether such an
ordering would yield an efficient algorithm for drawing a graph on a
genus $g$ surface (as it has been done in the planar
case~\cite{Kant96}).

\subsubsection*{Further extensions} Our approach relies on quite
general topological and combinatorial arguments, so the natural
next step should be to apply our methodology to other interesting
classes of graphs (not strictly triangulated), which have similar
characterization in the planar case.
Our topological traversal could be extended to the $3$-connected
case, precisely to embedded 3-connected graphs with face-width larger than 2, which correspond to polygonal meshes of genus $g$. We point out
that our encoding proposed in
Section~\ref{sec:encoding_application} could take advantage of the
existing compact encodings of planar
graphs~\cite{Chu98,Chi01,He99}, using similar parenthesis-based
approaches.

\subsubsection*{Lattice structure and graph encoding applications}
From the combinatorial point of view it should be of interest to
investigate whether edge orientations and  colorations in genus
$g$ have nice lattice properties, as in the planar case.
In the planar case, so-called minimal $\alpha$-orientations  have a deep
combinatorial role (they yield bijective constructions for several families
of planar maps, including triangulations), and as such,
  have also applications in graph
 drawing, random sampling,
  and coding~\cite{Pou03}.

In the planar case, as shown by Brehm~\cite{Brehm_thesis},
the minimal Schnyder wood is reached by a
``left-most driven'' traversal of the triangulation,
and is computable in linear time.
We would like to extend these principles to any genus
and derive from it a linear time encoding procedure with (asymptotically)
optimal compression rate. Hopefully these principles can also be applied
to polygonal  meshes of arbitrary genus.


\subsubsection*{Acknowledgments.}
We are grateful to Nicolas Bonichon, Cyril Gavoille and Arnaud Labourel for very
interesting discussions on Schnyder woods.
First author would like to thank \'Eric Colin de Verdi\`ere for
pointing out some useful topological properties of graphs on
surfaces.
We are extremely grateful to Olivier Bernardi, Guillaume Chapuy, and
Gilles Schaeffer for
enlightening discussions on the combinatorics of maps that
motivated this work and helped to clarify the ideas.
We finally thank the reviewers for their very insightful remarks.
The first two authors' work was partially supported by ERC
research starting grant "ExploreMaps".
The last author would like to thank CNPq and FAPERJ for financial support.


\bibliographystyle{abbrv}


%
%


\end{document}